\newcommand*{\doi}[1]{\href{http://dx.doi.org/\detokenize{#1}}{doi}}
\DeclareMathOperator{\N}{\mathbb{N}} % Natural numbers
\DeclareMathOperator{\Z}{\mathbb{Z}} % Integers
\newcommand{\p}{\mathbb{P}}
\newcommand{\J}{k}
\renewcommand{\P}{\mathbb{P}}
\newcommand{\E}{\mathbb{E}}
\newcommand{\tE}{\widetilde{\E}}
\newcommand{\tP}{\widetilde{\mathbb{P}}}
\newcommand{\dn}{D_n}
\newcommand{\di}{D_i}
\newcommand{\w}{\mathrm{Hole}}
\newcommand{\whole}{W}
\newcommand{\tstar}{\mathfrak{t}}
\newcommand{\sleep}{\mathfrak{s}}
\newcommand{\mn}{\mathrm{Exit}_{\dn}}
\newcommand{\sn}{\mathrm{Frozen}_{\dn}}
\renewcommand{\leq}{\leqslant}
\renewcommand{\geq}{\geqslant}
\newcommand{\cF}{\mathcal{F}}
\newcommand{\cG}{\mathcal{G}}
\newtheorem{theorem}[equation]{Theorem}
\newtheorem{lemma}[equation]{Lemma}
\newtheorem{proposition}[equation]{Proposition}
\begin{document}

\title%[Activated Random Walk with High Sleep Rate]
{Active Phase for Activated Random Walk on $\mathbb{Z}$}
\author{Christopher Hoffman \and Jacob Richey \and Leonardo T. Rolla}
\date{\today}

\keywords{Activated random walk, Diaconis-Fulton representation, Abelian property, self-ogranized criticality}

%%%%%%%%%%%%%%%%
%%% Abstract %%%
%%%%%%%%%%%%%%%%

\begin{abstract}
We consider the Activated Random Walk model on $\mathbb{Z}$.
In this model, each particle performs a continuous-time simple symmetric random walk, and falls asleep at rate $\lambda$.
A sleeping particle does not move but it is reactivated in the presence of another particle.
We show that for any sleep rate $\lambda < \infty$  if the density $ \zeta $ is close enough to $1$ then the system stays active.
\end{abstract} 
%\begin{abstract}
%We consider activated random walks on $\mathbb{Z}$ and show that the system stays active for any sleep rate $\lambda < \infty$, as long as the density $ \zeta $ is close enough to $1$.
%In this model, each particle performs a continuous-time simple symmetric random walk, and falls asleep at rate $\lambda$.
%A sleeping particle is reactivated upon meeting another particle.
%The proof uses a block argument introduced by Basu, Ganguly and the first author, further developed in the third author's monograph, with extra elements to handle dependence across blocks.
%\end{abstract} 

\maketitle

%%%%%%%%%%%%%%%%%%%%%%%%%%%%%%%%%%
\section{Introduction}
%%%%%%%%%%%%%%%%%%%%%%%%%%%%%%%%%%

Activated Random Walk (ARW) is a reaction-diffusion type of particle system that has been intensively studied in recent years. The ARW model belongs to a
%has generated a lot of interest because it belongs to a 
wider class of Abelian networks that are believed to exhibit self organized criticality. Finite versions of these Abelian networks are Markov chains that have a density parameter that can vary over time. The prediction of self organized criticality says that the density in the Markov chain is naturally attracted to the critical density. 

For many Abelian networks there are multiple ways to define a critical density. (For example, by using finite or infinite systems and by varying the starting configuration.)  
For ARW all of these different definitions appear to produce the same critical value.
Such a system is said to exhibit \emph{universality}.
Within the realm of Abelian networks, ARW has generated particular interest as it believed to both exhibit universality and to have a non-trivial critical state.
There has been some recent progress towards establishing universality for ARW~\cite{RollaSidoraviciusZindy19}.

So far much of the work on ARW has been dedicated to proving non-trivial bounds on the critical density~\cite{RollaSidoravicius12,BasuGangulyHoffman18,StaufferTaggi18}.
ARW on a graph can be parameterized by two parameters, a sleep rate $ \lambda $ and initial density $ \zeta $.
For every sleep rate $ \lambda $ and initial density $ \zeta $ the system either fixates or stays active.  For a fixed sleep rate $ \lambda $ remaining active is monotone in the density so there exists a critical density, below which the system fixates and above which the system stays active. See Figure~\ref{fig:murph}. More  precise definitions are given in Section~\ref{sec:setup}. It is highly nontrivial to show that the critical density is neither 0 nor 1. 

The most difficult cases for analyzing ARW have proven to be those where the underlying walk is recurrent, namely unbiased walks in dimensions $ 1 $ and $ 2 $. In dimension 1 the third author and Sidoravicius established that for any density the critical value is greater than zero~\cite{RollaSidoravicius12}. 
Basu, Ganguly and the third author proved that if the sleep rate is low then critical value is less than one~\cite{BasuGangulyHoffman18}. 
In this paper we establish the missing part of the active phase for the one-dimensional case: we show existence of a non-trivial active phase for arbitrarily large values of the sleep rate $ \lambda $. Thus the curve in Figure~\ref{fig:murph} lies strictly between the vertical lines $\zeta=0$ and $\zeta=1$.
The two-dimensional case remains widely open.

\begin{figure}
\includegraphics[page=2,width=.95\textwidth]{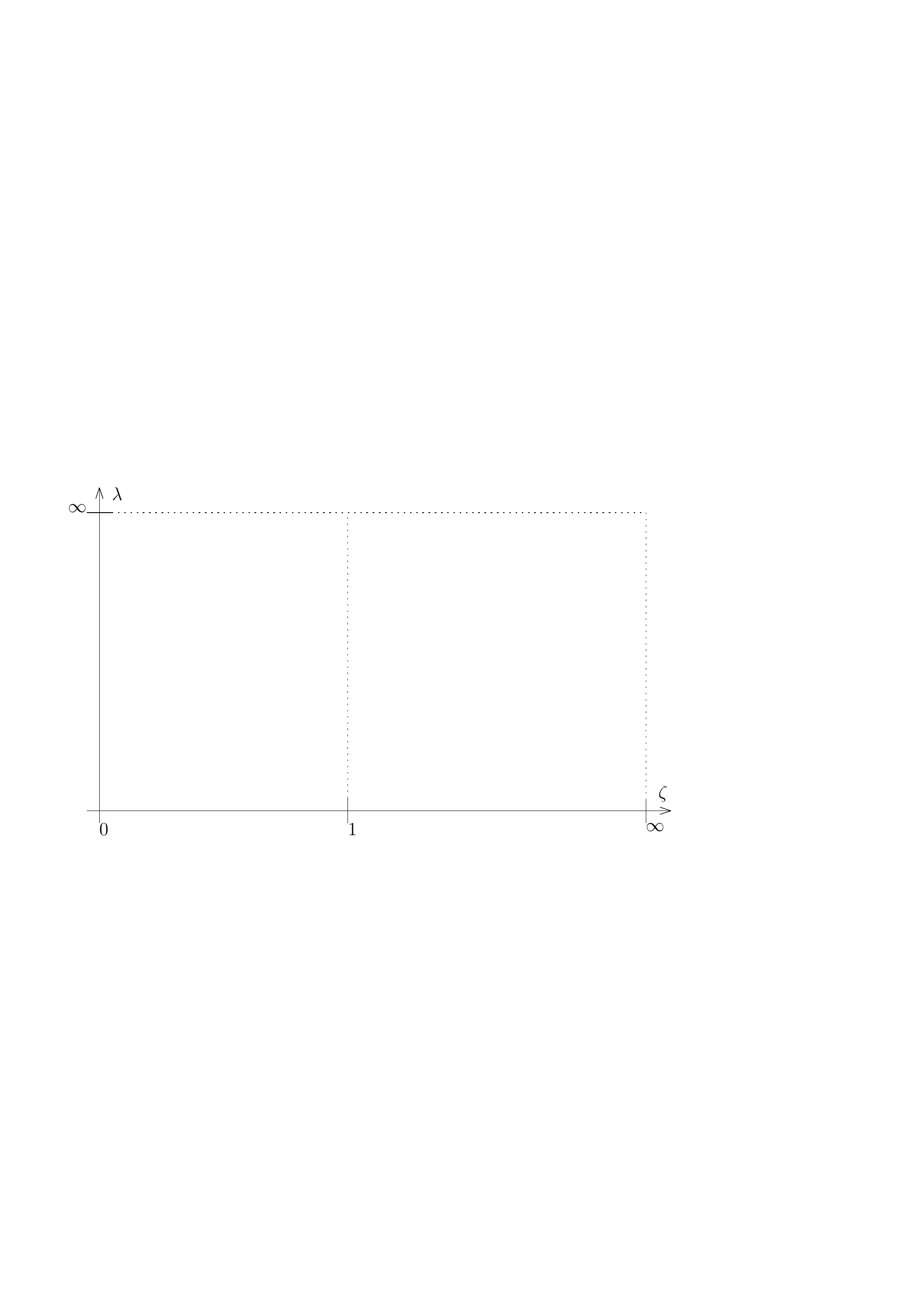}
\par\vspace{1em}
\caption{The phase diagram for ARW on $\Z$. In this paper we show that for every value of the sleep rate $\lambda$ the critical density $\zeta$ is less than one.}
\label{fig:murph}
\end{figure}

Our arguments follow the breakthrough work of~\cite{BasuGangulyHoffman18} which introduced a block argument to obtain some decoupling and make the dynamics amenable to analysis. We build upon a reformulation of the block approach presented in~\cite{Rolla19} and utilized in~\cite{AsselahSchapiraRolla19}.
However, there are serious challenges in analyzing the block dynamics without assuming $ \lambda $ to be small.

A brief description of the dynamics is the following.
At $ t=0 $, each site $ x \in \Z $ has a random number of particles, with an average $ \zeta>0 $ and (for simplicity) independent of the other sites.
Active particles perform continuous-time simple symmetric random walks, independently of each other.
At rate $ \lambda>0 $, an active particle decays to a sleeping state.
A sleeping particle does not jump, and remains in that state indefinitely, until the moment when another particle is present at the same site.
See~\cite{Rolla19} for more details and a construction of this interacting particle system, as well as motivation and physical background.

We say that the system \emph{stays active} if each site is visited by active particles infinitely many times.

\begin{theorem}
\label{main_thm}
For every $\lambda < \infty$, there exists $\zeta < 1$ such that the one-dimensional simple symmetric ARW with sleep rate $ \lambda $ and initial density $ \zeta $ will a.s.\ stay active.
\end{theorem}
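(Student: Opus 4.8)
The plan is to reduce the theorem to a quantitative input--output estimate for a single block and then feed that estimate into a renormalization that manufactures a self-sustaining current of activity. First I would invoke the Diaconis--Fulton stack representation together with the Abelian property, so that ``stays active'' becomes a statement about the odometer of a deterministic toppling procedure that I am free to schedule in any convenient order. Since the initial law is i.i.d.\ and hence ergodic under translations, the event of fixation is translation invariant, so by the standard $0$--$1$ law it suffices to show that the system fails to fixate with positive probability. I would realize non-fixation by constructing, with positive probability, an infinite rightward cascade: a sequence of toppling fronts that never dies and forces every site far to the right to topple infinitely often.

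To build the cascade I would partition $\Z$ into consecutive blocks $\dn$ of a large fixed length $L$ and stabilize them one at a time from left to right, recording the number $\mn$ of particles that exit $\dn$ to the right (feeding the next block), the number $\sn$ of particles that become frozen inside $\dn$, and the number of particles that leak back to the left. Conservation of particles across a block yields a recursion for the rightward current $R_{n+1}$ entering $\dn$'s successor in terms of $R_n$, the block's own ${\approx}\,\zeta L$ particles, the freezing $\sn$, and the left-leakage. I would treat $(R_n)_n$ as a process adapted to the block-by-block filtration and show that, whenever $R_n>0$, its conditional increment has strictly positive mean once $\zeta$ is close enough to $1$; a standard random-walk comparison then gives positive probability that $R_n$ stays positive forever, which is exactly the infinite cascade.

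Everything hinges on bounding the freezing and the back-leakage in a single block \emph{uniformly in} $\lambda$, and this is the step I expect to be the main obstacle. The mechanism I would exploit is that a particle can become permanently frozen only at a site that is a hole, since a particle arriving at an occupied site creates two particles and forces the sleep instruction there to act trivially; hence freezing is controlled by the number of holes $\whole$, which concentrates around $(1-\zeta)L$ and is small when $\zeta$ is near $1$. Making this sharp --- in particular showing that the left-leakage is also small, so that the net balance $\zeta L-\E[\sn]-(\text{leakage})$ is strictly positive --- is where I expect to need exact distributional identities: with a Poisson (or comparable) initial law and exponential sleep and jump clocks, the relevant settling and splitting probabilities should be computable in closed form through Beta, Gamma, and Dirichlet laws, yielding control with constants that do not degenerate as $\lambda\to\infty$.

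The difficulty in the large-$\lambda$ regime is intrinsic: a lone particle sleeps after only $O(1/\lambda)$ jumps, so no single particle traverses a block, and the cascade cannot be carried by any individual trajectory. It must instead be carried collectively by the fact that almost every site is occupied and therefore re-emits any particle that lands on it, so that activity propagates as a chain reaction rather than as transport. Quantifying this collective propagation, rather than tracking individual walks, is the heart of the argument, and it is precisely the exact-distribution computations that I expect to make the sign of the drift provable there. Once the single-block drift is shown to be positive, the renormalization of the second step produces an infinite cascade with positive probability, and the $0$--$1$ law upgrades this to almost-sure activity, completing the proof.
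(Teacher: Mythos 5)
Your proposal reproduces the paper's outer scaffolding---stack representation plus the Abelian property, the $0$--$1$ dichotomy (Lemma~\ref{lemma:criterion}), a partition into large blocks treated left to right, and a single-block estimate feeding a renormalization---but the step you yourself flag as the main obstacle is resolved by a mechanism that is false, and this is precisely where the paper's actual content lies. Your claim that ``freezing is controlled by the number of holes, which concentrates around $(1-\zeta)L$'' does not hold for large $\lambda$: a particle needs only to be momentarily alone at a site for a sleep instruction to stick, and if you genuinely \emph{stabilize} a block of density $\zeta<1$, then essentially every bulk site starts with a single active particle which falls asleep in place with probability $\lambda/(1+\lambda)\approx 1$ the first time it is toppled. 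The number of frozen particles is in no way bounded by the vacancy count; conversely, even one vacancy per block is fatal if it reliably captures a wandering particle, which for large $\lambda$ it does (a lone particle reaching an empty site sleeps there with probability $\lambda/(1+\lambda)$). What rescues the paper is not scarcity of holes but a \emph{drift} of the hole: the toppling order is chosen so that only one designated ``hot'' particle ever moves, walking over a carpet of particles kept deliberately in place, so sleep instructions can only matter at the single hole; when the hot particle sleeps there, the hole shifts one step right and a carpet particle is promoted to hot, while a left excursion that returns pushes the hole left by the excursion maximum, whose law $\p(Z=z)=\tfrac{1}{z(z+1)}$ has infinite mean. Taking the block half-width $a = 12 \lceil e^{100(\lambda+1)}\rceil$---exponentially large in $\lambda$---makes this leftward drift beat the sleep rate (Lemma~\ref{cal}), so that freezing, i.e.\ the hole reaching the right edge, is exponentially unlikely. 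Nothing in your proposal plays this role, and the appeal to closed-form Beta/Gamma/Dirichlet identities is not a substitute: no such computation appears in (or is plausible for) this problem, and no fixed $\zeta<1$ can work ``uniformly in $\lambda$'' anyway, since the parameters $a$, $K$, $\zeta$ must degenerate as $\lambda\to\infty$.

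Two further structural points would also need repair. First, full stabilization of blocks is exactly what one must avoid: the paper only ever performs a scheduled sequence of \emph{legal} topplings (never toppling the carpet), which is permitted because the non-fixation criterion only requires a lower bound on the odometer along some legal sequence; stabilizing would put the carpet to sleep and kill your cascade before it starts. Second, the self-sustaining \emph{rightward} current with positive conditional drift is not available: the carpet particles cannot be mobilized (they would sleep), so only the sparse free particles move, and their block-to-block emissions are essentially symmetric (each emission goes left with probability at least $\tfrac13$, Lemma~\ref{walklemma3}). The paper does not bound ``back-leakage'' as small; instead it absorbs the two-way traffic into mass balance equations and a conditional exponential-moment bound per block (Lemma~\ref{lem:singleblock}), concluding via Proposition~\ref{prop:fewsleep} that few free particles freeze, hence most exit the interval at \emph{either} end, which is all the odometer criterion needs. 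In short: the architecture you propose is compatible with the paper, but the two ideas that make the theorem true for all $\lambda$---the carpet-hole scheduling and the heavy-tailed excursion drift on exponentially large blocks---are absent, and the heuristics offered in their place do not survive scrutiny.
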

This result extends \cite{BasuGangulyHoffman18} which proved a the same result for small values of $\lambda$.

The study of ARW has seen considerable progress in the last decade~\cite{AmirGurel-Gurevich10,AsselahSchapiraRolla19,BasuGangulyHoffman18,BasuGangulyHoffmanRichey19,CabezasRolla20,CabezasRollaSidoravicius14,CabezasRollaSidoravicius18,DickmanRollaSidoravicius10,RollaSidoravicius12,RollaSidoraviciusZindy19,RollaTournier18,Shellef10,SidoraviciusTeixeira17,StaufferTaggi18,Taggi16,Taggi19,Taggi20}. Most fruitful approaches so far rely on the Abelian property of ARW's site-wise representation to design ``toppling procedures'' that provide partial information about the system. See~\cite{Rolla19} for a recent and comprehensive  account. Also see~\cite{BondLevine16,CandelleroGangulyHoffmanLevine17,Jarai18,CandelleroStaufferTaggi20} for recent results about other Abelian networks.

%\comment{L:I have a glitch reading this paragraph. The first sentence talks about a useful tool, and the second sentence talks about why the model is so challenging.?}
%\comment{ C: That was awkward.  I attached the first half of the paragraph to the previous paragraph and the second half to the following paragraph. Does this read better?}

Conservation of particles in this model leads to long-range space-time correlations that make the ARW a difficult model to study by standard methods.
%\marginpar{\raggedright\tiny }%
The argument introduced in~\cite{BasuGangulyHoffman18} gets around this problem by dynamically attaching particles to a sparse set of sites, $K\Z$ for some large value of $K$, called sources.
It also chooses $\lambda$, depending on $ K $, so  small that a particle can move from source to source with a small probability of falling asleep. They analyze the odometer function (the count of how many times a particle has moved from each source to each of its neighbors) using an energy-entropy calculation.

%\marginpar{\raggedright\tiny \ \\ L: The new paragraph introduced here made the following three look a bit awkward so I moved it to the next page.\\
%C: How about this?}%

This approach from \cite{BasuGangulyHoffman18} was reformulated in~\cite{Rolla19} as having two separate parts: a mass balance equation and the probabilistic analysis of a single block.
Using this reformulation, a quantitative improvement of the estimates from~\cite{BasuGangulyHoffman18} were obtained in~\cite{AsselahSchapiraRolla19}, finding the right order of $ \lambda $ for small $ \zeta $ up to a constant factor.

Contrary to the regime considered here, this program worked well for very small values of $ \lambda $, where a particle released at a certain site has high probability of traveling a long distance before it falls asleep.
%It has not been clear how to translate those ideas to the case of large $ \lambda $, neither was it clear how to obtain the single-block estimate.
%These are the main contributions of the present paper.
%
To translate those ideas to the case of large $ \lambda $
a natural starting point is to think of $ \zeta \approx 1 $ as a configuration with most sites occupied by a single particle, which provide a safe ``carpet'' for other particles to walk over, and few vacant sites where these wandering particles will be challenged by the high sleep rate $ \lambda $.
%The key idea for a single-block estimate is that, although a moving particle will almost always fall asleep once it arrives at a hole, each time it manages to leave the hole it will reactivate a number of particles which is very large on average.
%The implementation of this idea comes at a price.
%While the moving particle will on average reactivate many other particles before it enters a hole and falls asleep, the particle will itself end up sleeping, and the best we can do is to start moving other particles located at different sites.
%So, unlike the argument for small $ \lambda $, here we do not have a fixed site from which particles will depart. This poses substantial difficulties when we try to disentangle the behavior of different blocks.
Sections~\ref{sec:carpet_hole} and~\ref{sec:mbe} are devoted to develop a scheme that  allows us to introduce dynamics which allow for similar calculations as those in \cite{BasuGangulyHoffman18}.
%provides control on the statistical dependence of the dynamics on different blocks.
%
%To implement this, instead of having point sources at $K\Z$ we will have ``sources" which are intervals $\{[Kz-a,Kz+a]\}_{z \in \Z}$. At any given time each source has one special site  called the hole. The hole is usually unoccupied. Other sites in the source have one particle each. To allow the particles to move from source to source without falling asleep we introduce a ``carpet" which consists a particle at every site between adjacent sources. This gives a density of particles of about $1-1/K$.
%With this setup the Abelian property allows us to introduce dynamics which allow for similar calculations as those in \cite{BasuGangulyHoffman18}.

This paper is divided as follows. In Section~\ref{sec:setup} we briefly describe the site-wise representation and Abelian property. Section~\ref{sec:carpet_hole} is devoted to describing our toppling procedure in detail, including the stationary `carpet' initial condition. In Section~\ref{sec:mbe} we introduce the relevant sigma-algebras, particle counters, and mass balance equations, use them to state the single block estimate, and use the latter to prove Theorem~\ref{main_thm}. Finally, Section~\ref{omari} is devoted to proving the single block estimate.

%%%%%%%%%%%%%%%%%%%%%%%%%%%%%%%%%%
\section{Formal definitions and setup} \label{sec:setup}
%%%%%%%%%%%%%%%%%%%%%%%%%%%%%%%%%%

%Fix a sleep rate $\lambda < \infty$. We choose $\zeta < 1$ as a function of $\lambda$ -- exactly how $\zeta$ is chosen will be discussed in the proof of the single block estimate.

The ARW dynamics is defined as follows. 
The initial configuration is sampled from an ergodic measure $\nu$ on $\Z$ with mean $\zeta$ particles per site.
Particles can be active ($A$) or sleeping ($S$); initially all particles are active.
Each particle performs simple symmetric random walk at rate $ 1 $, and at each site, the transitions $A + S \to 2A$ and $A \to S$ occur at rates $\infty$ and $\lambda$, respectively.

More formally, the configuration of the system at each time $ t \geq 0 $ is given by $ \eta_t \in \{0, \sleep, 1, 2, \ldots\}^\mathbb{Z} $, where $\sleep$ is a formal symbol representing a single sleeping particle.
For each $ x \in \mathbb{Z} $, the system undergoes the transitions $ \eta_t = \tstar_{x, x-1} \eta_{t-} $ and $ \eta_t = \tstar_{x, x+1} \eta_{t-} $ at rates equal $ \frac{1}{2} $ times the number of active particles at $ x $, and transition $ \eta_{t} = \tstar_{x,\sleep} \eta_{t-} $ at rate $ \lambda $ times the same number.
The transitions $ \tstar $ are defined by
\begin{equation}
\tstar_{x,\sleep}(\eta)(z) =
\begin{cases}
\sleep, & z = x \text{ and } \eta(x) = 1, \\
\eta(z), & \text{otherwise},
\end{cases}
\end{equation}
and
\begin{equation}
\tstar_{x, y}(\eta)(z) =
\begin{cases}
\eta(y) + 1, & z = y, \\
\eta(x) - 1, & z = x, \\
\eta(z), & \text{ otherwise,} 
\end{cases}
\end{equation}
where we define $ \sleep + 1 = 2 $ to provide the $ A+S \to 2A $ reaction.

A useful description is the site-wise representation.
Each site $x$ has associated to it an infinite \emph{stack} of instructions $(\xi_{(x,k)})_{k = 1, 2, \ldots}$, which are sampled i.i.d.\ over $ x $ and $ k $, so that
\begin{equation}
\xi_{(x,k)} =
\begin{cases}
\tstar_{x, x+1} & \text{ with probability } \frac{1}{2(1+\lambda)}, \\
\tstar_{x, x-1} & \text{ with probability } \frac{1}{2(1+\lambda)}, \\
\tstar_{x, \sleep} & \text{ with probability } \frac{\lambda}{1+\lambda}.
\end{cases}
\end{equation}
We say that a site is stable if $ \eta(x) = 0 $ or $ \sleep $, and unstable otherwise.
The operation of \emph{toppling} a site $ x $ is \emph{legal} if $ x $ is unstable, and consists in updating the configuration by
$ \eta \to \xi_{(x,k)} \eta $, where $ k = \min \{j: \text{instruction } \xi_{(x,j)} \text{ has not been used}\} $.

%At each integer time $t \geq 0$, we find the left-most site $x \in V$ with $\eta_t(x) \geq 1$ (at least one active particle), and set 
%\comment{L: left-most is only our procedure, not intrinsic to the model \blue{J: changed a couple of words}}
%Formally, the system evolves as follows.

It is not obvious that a process $ (\eta_0)_{t \geq 0} $ following the rules described above exists. But it turns out that it does exist and can be constructed explicitly using the stacks $ \xi $ and some Poisson clocks to trigger toppling operations~\cite{Rolla19}.
Let $ \p $ denote the underlying probability measure.

The main reason to use the site-wise representation is the Abelian property.
It says that any two sequences of instructions that stabilize the system on a given finite region yield the same final configuration.
We are not going to use the Abelian property directly, but rather a condition for non-fixation that is obtained using it.

Given $ k\in\N $ and a sequence $ \alpha = (x_1,x_2,\dots,x_k) $, we say that $ \alpha $ is a \emph{legal sequence of topplings} for $ \eta $ if $ x_1 $ is unstable, $ x_2 $ is unstable after $ x_1 $ is toppled, $ x_3 $ is unstable after $ x_1 $ and $ x_2 $ are toppled, and so on.
We define the odometer $ m_\alpha(x) $ as the number of times that $ x $ appears in $ \alpha $.
Finally, we define
\[
m_\eta(x) = \sup_\alpha m_\alpha(x)
\]
where the supremum is over all finite legal sequences of topplings.

\begin{lemma}
\label{lemma:criterion}
If the initial state $ \nu $ is spatially ergodic, then
\[
\p( (\eta_t)_{t \geq 0} \text{ stays active} )
=
\lim_k
\p( m_\eta(0) \geq k )
=
0 \text{ or } 1
.
\]
\end{lemma}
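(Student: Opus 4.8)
The plan is to route everything through the combinatorial odometer $m_\eta$ and to split the statement into three reductions: a monotone-convergence identity, a neighbour-propagation step that upgrades a single-site event to a shift-invariant one, and an appeal to ergodicity. Throughout I would lean on the Abelian property, which guarantees that $m_\eta$ is well defined and that the odometers behave monotonically under enlarging the toppling region.

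First I would note that the events $\{m_\eta(0)\geq k\}$ are nested and decreasing in $k$, with $\bigcap_k\{m_\eta(0)\geq k\}=\{m_\eta(0)=\infty\}$, so continuity of $\p$ from above gives $\lim_k\p(m_\eta(0)\geq k)=\p(m_\eta(0)=\infty)$. It then remains to identify $\{\text{stays active}\}$ with $\{m_\eta(0)=\infty\}$ up to a null set, and to show this event has probability $0$ or $1$.

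The heart of the argument is a propagation claim: $m_\eta(x)=\infty$ for some $x$ if and only if $m_\eta(x)=\infty$ for every $x$. Using the least-action/Abelian monotonicity I would realize $m_\eta$ as the increasing limit of the odometers $m_{\eta,V_n}$ along an exhaustion $V_n\uparrow\Z$, and then argue locally: if a site $x$ is toppled arbitrarily often, then since the instructions $(\xi_{(x,k)})_k$ are i.i.d.\ with $\p(\xi_{(x,k)}=\tstar_{x,y})=\tfrac{1}{2(1+\lambda)}>0$ for each neighbour $y$, the law of large numbers forces infinitely many of the used instructions at $x$ to push a particle to $y$. Since a site can hold at most one stable (sleeping) particle, $y$ receives infinitely many particles and must itself topple infinitely often, so $m_\eta(y)=\infty$; iterating over the connected graph $\Z$ gives the claim. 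Consequently $\{m_\eta(0)=\infty\}=\{m_\eta(x)=\infty\ \forall x\}$ a.s., and the latter event is invariant under the spatial shift. Because $\nu$ is spatially ergodic and the stacks $\xi$ are i.i.d.\ (a Bernoulli, hence weakly mixing, factor), the joint law $\p$ is ergodic under the shift, so this invariant event has probability $0$ or $1$, whence $\p(m_\eta(0)=\infty)\in\{0,1\}$. Finally I would tie this to the dynamics: in the site-wise construction the number of topplings performed at $x$ up to time $t$ is nondecreasing and converges a.s.\ to $m_\eta(x)$ as $t\to\infty$, so ``each site is visited by active particles infinitely many times'' coincides exactly with $\{m_\eta(x)=\infty\ \forall x\}$. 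Combining the three steps yields $\p(\text{stays active})=\p(m_\eta(0)=\infty)=\lim_k\p(m_\eta(0)\geq k)$, with common value $0$ or $1$.

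I expect the main obstacle to lie in making the two qualitative identifications fully rigorous: matching the continuous-time dynamical odometer with the combinatorial $m_\eta$, which is defined as a supremum over all finite legal sequences rather than a single one, and carrying out the neighbour-propagation at the level of that supremum. Both hinge on the Abelian property and on monotonicity of odometers under enlarging the toppling region, which must be invoked carefully so that the local ``particles in, particles out'' bookkeeping controls the sup and not merely one admissible sequence.
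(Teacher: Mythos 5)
The paper does not actually prove Lemma~\ref{lemma:criterion}: its entire ``proof'' is the one-line citation to \cite[Corollary~2.8]{Rolla19}. Your proposal instead reconstructs the proof of that cited result from scratch, and your architecture is exactly the standard one behind it: monotone convergence of the nested events $\{m_\eta(0)\geq k\}$ to $\{m_\eta(0)=\infty\}$; realizing $m_\eta$ as the increasing limit of finite-volume odometers $m_{\eta,V_n}$ along an exhaustion (valid by the least action principle); the neighbour-propagation bound $m_{\eta,V_n}(y)\geq(\text{arrivals at }y)-1$, which together with the law of large numbers for the i.i.d.\ stack at $x$ upgrades $\{m_\eta(0)=\infty\}$ to the shift-invariant event $\{m_\eta(x)=\infty\ \forall x\}$; and ergodicity of the joint law (product of the ergodic $\nu$ with the Bernoulli, hence mixing, stack measure) to get the $0$--$1$ law. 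These steps are correct as you present them, and they buy something the paper's citation does not: a self-contained argument for the combinatorial half of the statement.

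The caveat is the step you dispatch in one sentence, namely that the number of topplings the continuous-time dynamics performs at $x$ by time $t$ converges a.s.\ to $m_\eta(x)$, so that ``stays active'' coincides with $\{m_\eta(x)=\infty\ \forall x\}$. This is not a routine technicality to be ``invoked carefully''; it is the substantive content of the very corollary the paper cites, going back to Rolla--Sidoravicius \cite{RollaSidoravicius12}. One direction requires showing that the topplings of the stochastic dynamics that causally affect a given site within a finite time horizon a.s.\ form a finite legal sequence (this is intertwined with the construction and non-explosiveness of the infinite-volume dynamics), giving the upper bound by $m_\eta(x)$; the converse requires a least-action argument plus the Poisson-clock fact that a site left unstable forever is a.s.\ toppled, to rule out the dynamics stalling below $m_\eta$, and an a.s.\ dichotomy at the level of the dynamics (all sites visited infinitely often versus all finitely often) to match the paper's definition of staying active. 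You flagged this honestly as the main obstacle, but as written it is assumed rather than proven, and filling it in amounts to reproving \cite[Theorem~2.11 and Corollary~2.8]{Rolla19}. So: a faithful and essentially correct outline of the known proof, more explicit than the paper's citation, but incomplete precisely at its crux.
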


%\begin{proof}
%This is a reformulation of 
%\end{proof}

%\begin{lemma}
%\label{lemma:criterion}
%Let $X_n$ denote the number of particles that visit site $ x=a $ after the region $\dn$ is completely stabilized (under activated random walk).
%If
%\begin{equation} 
%\inf_k \sup_{n \in 2\N} \p( X_n \geq k) > 0
%\end{equation}
%then ARW does not fixate a.s.
%\end{lemma}

\begin{proof}
This is just a reformulation of~\cite[Corollary~2.8]{Rolla19}.
\end{proof}

Finally, we can take as our initial distribution any shift-invariant ergodic measure, thanks to the following result from~\cite{RollaSidoraviciusZindy19}.

\begin{theorem}
\label{any_ergodic}
For any $\lambda$, there exists $\zeta_c$ such that, for any shift-invariant ergodic distribution $\nu$ supported on active configurations with density $\zeta$, the Activated Random Walk model with initial condition sampled from $\nu$ and sleep rate $ \lambda $ will a.s.\ fixate if $\zeta < \zeta_c$ and a.s.\ stay active if $\zeta > \zeta_c$.
\end{theorem}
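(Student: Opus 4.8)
The plan is to reduce the dichotomy for an arbitrary shift-invariant ergodic law $\nu$ to the same dichotomy for a single reference family, namely the i.i.d.\ (say Poisson) measures $\mu_\rho$ of density $\rho$, and then to show that the two thresholds coincide. The unique number $\zeta_c = \zeta_c(\lambda)$ will be defined through the reference family, and the whole content of the theorem is the \emph{universality} claim that the threshold depends on $\nu$ only through its density $\zeta$.

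First I would record the two soft ingredients. By Lemma~\ref{lemma:criterion}, for every ergodic $\nu$ the event of staying active has probability $0$ or $1$, so ``$\nu$ stays active'' is an unambiguous, deterministic property of the pair $(\nu,\lambda)$. Second, from the site-wise representation and the Abelian property one gets \emph{monotonicity of the odometer}: if $\eta \le \eta'$ coordinatewise (in the number of particles per site) then $m_\eta(x) \le m_{\eta'}(x)$ for all $x$, since a legal toppling sequence for $\eta$ remains legal for $\eta'$. Consequently activity is monotone under stochastic domination of initial laws; applied to the coupling $\mu_\rho \preceq \mu_{\rho'}$ for $\rho \le \rho'$, this shows that the reference family has a well-defined threshold $\zeta_c := \inf\{\rho : \mu_\rho \text{ stays active}\}$.

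The heart of the argument is a sandwiching step. Fix $\nu$ of density $\zeta$. To prove that $\zeta > \zeta_c$ forces $\nu$ to stay active, choose $\rho$ with $\zeta_c < \rho < \zeta$ and compare the two systems inside a large box $B_N$. By the ergodic theorem the number of $\nu$-particles in $B_N$ is $\approx \zeta\lvert B_N\rvert$, hence at least $\rho\lvert B_N\rvert$ for all large $N$, whereas a typical $\mu_\rho$-configuration places about $\rho\lvert B_N\rvert$ particles in $B_N$. Using the Abelian property to stabilize $B_N$ with absorbing boundary, together with a rearrangement estimate showing that the stabilization outcome inside the box depends on the initial condition essentially only through the total particle count (up to a lower-order boundary correction), one concludes that whenever $\mu_\rho$ drives the center of the box active, so does $\nu$. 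Letting $N\to\infty$ and invoking the $0$--$1$ law yields that $\nu$ stays active. The reverse sandwich, with $\zeta < \rho < \zeta_c$, shows that $\zeta < \zeta_c$ forces fixation; together these identify the threshold for $\nu$ with $\zeta_c$, independently of $\nu$.

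The main obstacle is exactly the rearrangement/comparison estimate underlying the sandwiching step. The ergodic theorem controls only the \emph{total} number of particles in a large box, whereas the ARW dynamics is genuinely nonlocal: a single particle can travel far, and local density fluctuations of $\nu$ interact both with this long-range transport and with the box boundary. What must be shown is that, in the large-volume limit, only the density matters—i.e.\ that the macroscopic mass balance is insensitive to the microscopic arrangement of particles and to the boundary condition, uniformly in $N$. Establishing this quantitative locality and mass-conservation comparison is the technical crux, and it is precisely the analysis carried out in~\cite{RollaSidoraviciusZindy19}, which I would invoke to complete the proof.
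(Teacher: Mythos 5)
The paper gives no proof of this theorem at all: it is imported verbatim from~\cite{RollaSidoraviciusZindy19}, and your proposal, after its sketch, rests on exactly the same move, since your ``rearrangement estimate'' is not an auxiliary lemma but is itself the full universality content of that reference (as stated literally---that stabilization in a box depends only on the total particle count---it is false, and the weak form you need is precisely the theorem being proved, so the surrounding soft steps would be circular without the citation). Hence your approach is essentially the same as the paper's, namely proof by appeal to~\cite{RollaSidoraviciusZindy19}, and that is the correct thing to do here.
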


This allows us to choose a particularly useful initial condition, which is described in Section~\ref{sec:carpet_hole}.

%%%%%%%%%%%%%%%%%%%%%%%%%%%%%%%%%%
\section{Carpet-hole toppling procedure}
\label{sec:carpet_hole}
%%%%%%%%%%%%%%%%%%%%%%%%%%%%%%%%%%

We perform a particular sequence of topplings where particles and sites are grouped into large blocks.
We work at one block at a time, following a left-to-right policy.
%Inside each block, particles follow a program we call the `carpet-hole toppling procedure.'

This `carpet-hole toppling procedure' classifies particles into two types -- `free' and `carpet.' The free particles will be further subdivided into `hot,' `thawed' and `frozen'. 
There are also two types of regions of space -- `blocks' and `transit regions' -- that are fixed throughout the process, and a few special sites called `holes,' which constantly move inside each block.

\subsection{Initial configuration on $\Z$}

For a given $\lambda < \infty$, we will take
$a = a(\lambda)$ and $K = K(a,\lambda) > a$ large (to be chosen later in \eqref{aclu}).
Consider the following configuration of particles
\begin{equation}
\eta_0^{\rm neat}(x) =
\begin{cases}
0, & x \in 2K\Z, \\
1, & \text{otherwsise},
\end{cases}
\end{equation}
shown in Figure~\ref{fig:configurations1}.

%\begin{figure}
%\includegraphics[page=1,width=\textwidth]{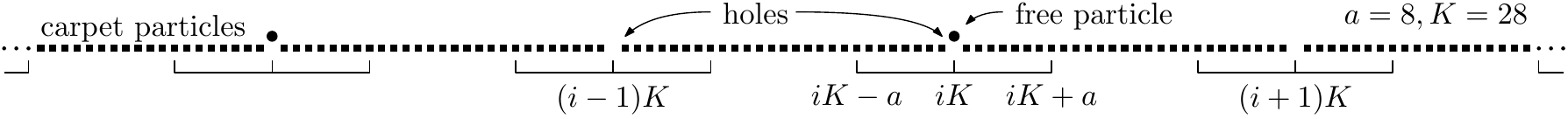}
%\caption{A portion of the initial configuration which contains four blocks separated by three transit regions. All three blocks have holes which are located at $(i-2)K$ $(i-1)K$, $iK$ and $(i+1)K$. The two holes at $(i-1)K$ and $(i+1)K$ have no particles.}
%\end{figure}
So initially every site has exactly one particle, except for the sites $iK$ for $i = 2, 4, 6,\dots$, which have zero particles.
The initial configuration is a random translation of $ \eta_0^{\rm neat} $:
\begin{equation} 
\nu_0 = \frac{1}{2K} \sum_{j=1}^{2K} \delta_{\theta^j \eta_0^{\rm neat}},
\end{equation}
where $\theta$ is the usual shift map. Since $\nu_0$ is shift-invariant ergodic, using Theorem~\ref{any_ergodic} allows us to start from this initial configuration in order to prove the main result. 
Observe that the density of particles in $\eta_0^{\rm neat}$ is $\zeta = 1-(2K)^{-1}$.

\subsection{Toppling procedure on finite configurations}
\label{sub:procedure}

We now specify a procedure to decide which unstable sites to topple, so as to obtain a lower bound in probability to the number of particles that exit a large interval.

Fix $ n \in 2\N $, and let $$ \dn = (a,nK+K-a) .$$
For each $i = 1, 2, \ldots, n$, the segment $[iK-a, iK+a]$ is called the $i$th \emph{block} and $(iK+a, iK+K-a)$ is called the $i$th \emph{transit region}. The sites in $K\mathbb{Z}$ are called \emph{holes}. These holes may move around during the procedure but they always stay within their block.

We divide the particles into several categories and particles will be able to switch back and forth between the categories by rules which we will now describe. Each particle will either be a \emph{free} particle or a \emph{carpet} particle.
Initially, particles at sites $K, 3K, 5K, \ldots$ are called \emph{free} particles, and all other particles are called \emph{carpet} particles. All free particles start \emph{thawed}, and the one at site $ x=K $ is declared \emph{hot}.

%\begin{sideways}
%\begin{figure}
%%\vspace*{5em}
%\includegraphics[page=1,width=\textwidth]{figures/configurations}
%\caption{A portion of the initial configuration which contains four blocks separated by three transit regions. All three blocks have holes which are located at $(i-2)K$ $(i-1)K$, $iK$ and $(i+1)K$. The two holes at $(i-1)K$ and $(i+1)K$ have no particles.}
%\end{figure}\end{sideways}

\begin{sidewaysfigure}
\vspace*{.65\textwidth}
\includegraphics[page=1,width=\textheight]{figures/configurations}
\par\vspace{1em}
\caption{A portion of the initial configuration which contains four blocks separated by three transit regions. All three blocks have holes which are located at $(i-2)K$, $(i-1)K$, $iK$ and $(i+1)K$. The two holes at $(i-1)K$ and $(i+1)K$ have no particles.}
\label{fig:configurations1}
\par\vspace{7em}
\includegraphics[page=2,width=\textheight]{figures/configurations}
\par\vspace{1em}
\caption{Example of configuration during the procedure. This choice of hot particle assumes that all free particles to the left of the displayed region are frozen.}
\label{fig:configurations2}
\end{sidewaysfigure}

The following properties are trivially satisfied by $ \eta_0^{\rm neat} $ (as well as other configurations $ \eta^{\rm neat}(m, i) $ that we will introduce later), and will be preserved by the procedure (see Figure~\ref{fig:configurations2}):
\begin{enumerate}[(P1)]
\item
\label{prop:first}
\label{prop:onehole}
Each block~$ i $ has exactly one hole which is located at some site $ x \in [iK,iK+a] $.
\item
\label{prop:hascarpet}
Every site except the holes contains a carpet particle.
\item
\label{prop:carpetactive}
Carpet particles between the hole and $ x=iK+a $ are active.
\item
Free particles are always active.
\item
\label{prop:freewhere}
All free particles except the hot particle are at sites $ iK $ or $ iK \pm a $, and there is at most one free particle at $ iK $ for each $ i $.

%\comment{C: How does a free (but not hot) particle get to $ik-a$?}

\item
\label{prop:atmostone}
There is at most one frozen free particle per block.
\item
\label{prop:frozenatikpa}
There is a frozen free particle in block~$ i $ if and only if the hole and the frozen particle are both at position $ iK+a $.
\item
\label{prop:holeempty}
In the block containing the hot particle, the hole (which by~\ref{prop:onehole} is unique) is either vacant or occupied by the hot particle itself, unless the hole is at $ x=iK+a $.
\item
\label{prop:hotisfree}
The hot particle is free and thawed.
\label{prop:last}
\end{enumerate}

We only move the hot particle.
We will follow a \emph{left-most priority policy} for choosing which free particle in $ \dn $ will be hot, described as follows.
To attempt an emission, we choose the left-most block (smallest $ i $) that contains a thawed free particle.
Among the thawed free particles in block~$ i $, we choose the one at $ iK $ if there is one, otherwise one from $ iK-a $. If neither of these two sites has a free particle, by~\ref{prop:freewhere} we choose one from $ iK+a $. We declare the chosen particle to be \emph{the hot particle}.
This seemingly strange choice rule inside the block is to ensure~\ref{prop:holeempty}.
After each successful or failed attempted emission from a block, we choose the next hot particle (possibly the same!) with the same criterion.

\subsection{Attempted Emissions.}
%Suppose the hot particle has arrived at a new block,
%Then by~\ref{prop:freewhere} and~\ref{prop:hotisfree} it must be in some block.
%which we call block~$i$.
Suppose we have just declared a particle to be hot, say at block~$ i $.
We now outline a portion of the carpet-hole toppling procedure that we call an \emph{attempted emission}. This is the evolution of the carpet-hole toppling procedure until the hot particle reaches a new block or until the hot particle is frozen. We call the arrival of the hot particle at either block~$i-1$ or block~$i+1$ a \emph{successful emission}.
We call an attempted emission ending with the hot particle being frozen a \emph{failure}. Note that each attempted emission is associated with one block so it makes sense to speak of the attempted emissions in block~$i$.

{\bf Case 1: There is a frozen particle at block~$ i $. }
In this case, by ~\ref{prop:onehole},~\ref{prop:hascarpet},~\ref{prop:frozenatikpa}, and~\ref{prop:hotisfree}, every site in block~$ i $ has a particle which is not the hot particle. We repeatedly topple the site that contains the hot particle until it arrives at $ iK \pm K \mp a $. This is a \emph{successful emission}.

Moreover, if in this process the hot particle visits every site in $ [iK,iK+a] $, we move the hole to position $ iK $, turn the frozen free particle at $iK+a$ into a carpet particle, and turn the carpet particle at site $ x=iK $ into a thawed free particle.
Properties~\ref{prop:first}-\ref{prop:last} are preserved (using~\ref{prop:atmostone}, there are no other frozen particles in block~$ i $ besides the one we have just thawed).

{\bf Case 2: There is not a frozen particle at block~$ i $. }
This case is more common and more involved to describe.
By~\ref{prop:frozenatikpa} and~\ref{prop:holeempty}, the hole is vacant.
We repeatedly topple the hot particle until it either enters a neighboring block at $ iK \pm K \mp a $ (which finishes the emission attempt with success) or arrives at the hole.

Once at the hole, we topple the hot particle.
If it sleeps, we turn it into a carpet particle, turn the next site to the right into the hole and its carpet particle into the hot particle (by~\ref{prop:carpetactive}, it is active).
If this happens at site $ iK+a-1 $, so that the hole moves to $ iK+a $, the emission attempt has failed, the hot particle is declared frozen and (if possible) a new hot particle is to be chosen (so~\ref{prop:hotisfree} is not violated).

If it jumps to the right, it may either arrive at $ iK+K-a $, completing a successful emission, or return to the hole (back to the same status quo).
If it jumps to the left, it may either arrive at $ iK-K+a $, completing a successful emission, or return to the hole, in which case it will have visited a number if sites in $ [iK,iK+a] $.
In this case, we move the hole to leftmost site in $ [iK,iK+a] $ just visited, turn the hot particle into a carpet particle, and turn the carpet particle at the (new) position of the hole into the new hot particle, preserving~\ref{prop:carpetactive}.
This will be repeated until the hot particle is successfully emitted to a neighboring block or remains frozen at $ iK+a $.
Again, Properties~\ref{prop:first}-\ref{prop:last} are preserved.

{\bf Case 3: There are no thawed particles in $ \dn $. }
When there are no more free thawed particles in $ \dn $, then we cannot choose a new hot particle and the carpet-hole toppling procedure terminates.

\subsection{Conservation of free particles}

By following the above procedure, the number of free particles is conserved, as is the number of carpet particles. The number of holes is also conserved, and the hole in block~$i$ never leaves block~$i$. 
Moreover, every site between the hole and the right hand boundary $ iK+a $ contains an active carpet particle; thawed free particles are always active; and frozen free particles are always at the boundary of a block.

Let $\sn$ denote the number of frozen free particles remaining in $\dn$ after applying this toppling procedure to the first $n$ blocks. 
Also let $\mn$ denote the number of free particles that exit $\dn$ while applying this toppling procedure to the first $n$ blocks. 

\begin{lemma} \label{got nuts} The number of free particles never changes during the carpet-hole toppling procedure. It is always $n/2$ and
\begin{equation}\label{almond}\mn + \sn = \frac{n}{2}.\end{equation}
\end{lemma}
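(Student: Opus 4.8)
The plan is to establish the two assertions separately: first the invariance of the number of free particles, and then the exit/frozen decomposition \eqref{almond}. For the invariance, I would argue by induction on the sequence of elementary steps performed by the carpet-hole procedure, showing that every single step preserves the total count of free particles. There are only finitely many kinds of steps, so this reduces to a case check against the definitions in Section~\ref{sub:procedure}. The key observation is that every conversion between particle types is \emph{balanced}: whenever a free particle is turned into a carpet particle, a carpet particle is simultaneously turned into a free particle. Concretely, in Case~1 the frozen free particle at $iK+a$ becomes carpet while the carpet particle at $iK$ becomes free; in Case~2, each time the hot particle sleeps it becomes carpet but the carpet particle at the adjacent (new hole) site becomes the new hot (hence free) particle, and similarly when the hot particle returns to the hole after a leftward excursion. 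Pure toppling steps that only move the hot particle do not change any particle's type at all. Thus the number of free particles is invariant under every step.

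Next I would pin down the constant value of this invariant. Since the count is conserved, it suffices to evaluate it in the initial configuration $\eta_0^{\rm neat}$ restricted to $\dn=(a,nK+K-a)$. By the initialization rule, the free particles are exactly those at sites $K,3K,5K,\dots$, i.e.\ at the odd multiples of $K$. Counting the odd multiples of $K$ lying in the first $n$ blocks gives $n/2$ free particles (here $n\in 2\N$, so $n/2$ is an integer); the holes at the even multiples $2K,4K,\dots$ carry no particle and contribute nothing. Combining with the invariance from the previous step, the number of free particles equals $n/2$ throughout the procedure.

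Finally, for \eqref{almond} I would track where these $n/2$ free particles can end up once the procedure terminates (Case~3, when no thawed free particle remains in $\dn$). By conservation, every free particle is accounted for exactly once, and each falls into one of two mutually exclusive and exhaustive fates: it either exits $\dn$ (contributing to $\mn$) or it remains inside $\dn$. A free particle remaining inside cannot be thawed, since the procedure only halts when there are no thawed free particles left, and by~\ref{prop:hotisfree} the hot particle is thawed, so no free particle is hot at termination either; hence every free particle remaining in $\dn$ is frozen and contributes to $\sn$. This gives the partition $\mn+\sn=n/2$, which is exactly \eqref{almond}.

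The main obstacle I anticipate is the bookkeeping in the second step of Case~2, where the hole is advanced after a partial leftward excursion of the hot particle: one must verify that the type-conversions there are genuinely balanced (one free$\to$carpet against one carpet$\to$free) and that no free particle is silently created or destroyed when the hole is relocated and a new hot particle is designated. Once each step is checked against the rules, the induction, the initial count, and the terminal dichotomy are all routine.
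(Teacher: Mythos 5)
Your proposal is correct and follows essentially the same argument as the paper's proof: the count is conserved because every free$\to$carpet conversion is paired with a carpet$\to$free conversion, the initial count in $\dn$ is $n/2$, and at termination each free particle has either exited (contributing to $\mn$) or remains frozen (contributing to $\sn$). Your extra case-checking and the observation that no thawed or hot free particle can remain at termination simply make explicit what the paper leaves implicit.
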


\begin{proof}
There are initially $n/2$ free particles in $\dn$. Whenever we change a free particle to a carpet particle then we declare a carpet to be a free particle. This is the only time that we change carpet particles to free particles. Thus the number of free particles never changes. After running the carpet-hole toppling procedure each free particle has either exited $\dn$ and contributed to $\mn$, or remains frozen inside $\dn$ and contributes to $\sn$.
%Thus
%\begin{equation*}
%\label{joy}\mn + \sn = \frac{n}{2}.
%\qedhere
%\end{equation*}
\end{proof}

\subsection{Proof of Non-Fixation}

The main work in proving Theorem~\ref{main_thm} is to establish the following proposition.
\begin{proposition}
\label{prop:fewsleep}
Given $ \lambda<\infty $, there exist $c > 0, a \in \N$ and $K \in \N$ such that for $n$ sufficiently large, 
\begin{equation}
%\p(\sn \geq n/4) \leq e^{16}e^{-n}.
\p(\sn \geq n/4) \leq e^{-cn}.
\end{equation}
%{\bf I changed the statement of the theorem. It shouldn't cause any problem but someone should make sure. We should also check to see that this is what we actually prove.}
\end{proposition}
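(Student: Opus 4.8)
The plan is to turn the estimate into a concentration bound for a sum of $n$ conditionally small indicators, one per block. By Lemma~\ref{got nuts} we have $\sn = n/2 - \mn$, so $\{\sn \geq n/4\}$ is exactly $\{\mn \leq n/4\}$, and it suffices to show that at least a quarter of the $n/2$ free particles are emitted out of $\dn$ off an event of probability $e^{-cn}$. The right bookkeeping is a family of per-block counters recording, for each block $i$, the number $\mathrm{In}_i$ of free particles entering from a neighbor, the number $\mathrm{Out}_i$ leaving to a neighbor, and the indicator $f_i\in\{0,1\}$ that a frozen particle remains at termination. Since free particles are neither created nor destroyed, local conservation at block $i$ reads $f_i = \mathbf 1[i\text{ odd}] + \mathrm{In}_i - \mathrm{Out}_i$, which telescopes to the global identity $\mn+\sn=n/2$ of Lemma~\ref{got nuts}. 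The global identity alone is inert; the content is to control how many $f_i$ equal $1$.

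First I would build a filtration adapted to the left-most priority policy: reveal the stack instructions in the order they are consumed and let $\cG_i$ be generated by everything up to the moment the procedure has exhausted all thawed particles in blocks $1,\dots,i$. Because the dynamics is driven by i.i.d.\ stacks and proceeds one block at a time from left to right, the evolution inside block $i+1$ consumes fresh instructions conditionally on $\cG_i$. This is what will allow me to treat the events $\{f_i=1\}$ as conditionally independent enough for an exponential (Chernoff/Azuma) bound rather than a mere first-moment estimate.

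The crucial input is the single-block estimate, to be proved in Section~\ref{omari}, which I would formulate as: there is a choice of $a=a(\lambda)$ and $K=K(a,\lambda)$ and a constant $\epsilon<1/4$ such that, conditionally on $\cG_{i-1}$, the probability that block $i$ still carries a frozen free particle at termination is at most $\epsilon$. The mechanism is that a hot particle can only fall asleep at the (vacant) hole: everywhere else it moves over the carpet, where each sleep instruction is a no-op because the site already holds a second particle. Freezing therefore forces the hole to march from its position near $iK$ all the way to $iK+a$. Each sleep at the hole advances it one step right (probability $\tfrac{\lambda}{1+\lambda}$ per visit), but each leftward excursion of the hot particle resets the hole leftward by a typical amount of order $\log a$, the depth of a simple-random-walk excursion truncated at $iK$. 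Choosing $a$ with $\tfrac12\log a>\lambda$ gives the hole a strictly negative drift per visit, so reaching $iK+a$ is exponentially unlikely in $a$, and $\epsilon$ can be pushed below $1/4$; here $K$ is taken large so that, once emitted into a transit region, a particle crosses it by sleep-free simple random walk and is reliably deposited in the neighboring block, decoupling the blocks.

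Granting the single-block estimate, I would finish by writing $\sn=\sum_{i=1}^n f_i$ and dominating it by a sum of conditionally Bernoulli$(\epsilon)$ variables for $(\cG_i)$: since $\mathbb{E}[f_i\mid\cG_{i-1}]\le\epsilon<1/4$, the standard Chernoff bound for such dominated sums gives $\p(\sn\ge n/4)\le e^{-cn}$ with $c=c(\epsilon)>0$, as claimed. The main obstacle is unquestionably the single-block estimate: quantifying the competition between the rightward push of sleeps and the leftward resets from excursions, and doing so uniformly over the random, $\cG_{i-1}$-measurable entry data of the block — the hole's starting position, the active/sleeping status of the carpet to the left of the hole, and the number of times the block is revisited. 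A secondary subtlety, which I expect to demand care after the estimate itself, is making the conditional-independence structure rigorous even though leftward emissions feed particles back into already-processed blocks; one must control this re-processing within the left-most priority scheme so that the clean Chernoff bound still applies.
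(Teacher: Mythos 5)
There is a genuine gap, and it is precisely the issue you relegate to a ``secondary subtlety'' in your last paragraph. Your plan requires a single-block estimate of the form $\E[f_i \mid \cG_{i-1}] \leq \epsilon < 1/4$, where $f_i$ is the indicator that block~$i$ holds a frozen particle \emph{at termination of the whole procedure on $\dn$}. But $f_i$ is not determined by the information revealed during the left-to-right sweep through blocks $1,\dots,i$: after the procedure first exhausts the thawed particles in blocks $1,\dots,i$, particles are repeatedly emitted from block~$i+1$ back into block~$i$, each such re-entry reopens the possibility of freezing or unfreezing block~$i$, and the number of re-entries depends on everything to the right (which in turn depends on what block~$i$ sent rightward). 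So the conditional expectation you want to bound involves an unbounded, strongly dependent feedback loop, and no choice of left-adapted filtration makes $\{f_i=1\}$ into a conditionally dominated Bernoulli event in the way your Chernoff argument needs. This is exactly the obstruction the paper flags (``these events appear to be dependent in some extremely complicated way'') before abandoning the approach you propose.

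The paper's resolution requires machinery your proposal does not supply: (i) doubled stacks $(\xi^{y,L},\xi^{y,R})$ in the transit regions, so that right-to-left and left-to-right crossings consume disjoint randomness; (ii) counters $S_i(m)$, $L_i(m)$ for the block run in isolation with $m$ extra particles placed at its right boundary, together with the Abelian-type identity $S_i^n(0)=S_i(M^n_i)$ of Lemma~\ref{lemma:addsame}, which converts the feedback into a single integer parameter $m$; and (iii) a single-block estimate in the summed, uniform form of Lemma~\ref{lem:singleblock},
\[
\sup_{\ell \geq 0} \sum_{m = 0}^\infty \E \left[ e^{16 S_i(m)}\, 1\{L_i(m) = \ell \} \,\middle|\, \cF_{i-1} \right] < e^3 ,
\]
which is then chained across blocks by summing over all deterministic vectors satisfying the mass balance equations \eqref{eq:mbe}, rather than by a martingale/Chernoff bound. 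Note the structural difference: because one cannot control the conditional law of the random feedback $M^n_i$, the paper bounds an exponential moment summed over \emph{all} possible values of $m$ simultaneously, uniformly in the left-emission count $\ell$, and recovers the exponential bound via Markov applied to $\E e^{16\sn}$. Your heuristic for why a single isolated block rarely freezes (sleeps push the hole right by $1$, excursions reset it left by order $\log a$, so choosing $a$ exponentially large in $\lambda$ gives negative drift) is correct and matches Section~\ref{omari}; what is missing is the entire reduction that makes a per-block statement usable, and that reduction is the main content of Section~\ref{sec:mbe}, not a technicality to be checked afterwards.
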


This quickly implies the main theorem, via Lemma~\ref{lemma:criterion}.

\begin{proof}
[Proof of Theorem~\ref{main_thm}]
Assume that $ \eta_0 = \eta_0^{\rm neat} $, which occurs with probability $ \frac{1}{2K} $.
First note that $ m_\eta(a) + m_\eta(b) \geq \mn $, where $ b=nK+K-a $.
This is because every toppling performed in this procedure is legal for the ARW.
When the procedure ends, there are at least $ \mn $ particles at $ \{a,b\} $ and these two sites can now be toppled at least as many times as the number of particles standing there.

Recall that there are initially $n/2$ free (non-carpet) particles in the carpet configuration. Lemma~\ref{got nuts} and Proposition~\ref{prop:fewsleep} yield
\begin{equation}
%\p\left(\frac{\mn}{r} > \frac{1}{4K}\right) = 
\p\left(\mn \geq n/4\right) = \p\left(\sn \leq n/4\right) > 1-e^{cn}%= 1-e^{cr/K}
\end{equation}
for large $n$. 
%The equality holds because of~\eqref{almond}. 
Since $K$ is fixed, letting $n \to \infty$ and using Lemma~\ref{lemma:criterion} we conclude that the system will a.s.\ stay active.
\end{proof}

%%%%%%%%%%%%%%%%%%%%%%%%%%%%%%%%%%
\section{Coupling, filtration and mass balance equations}
\label{sec:mbe}
%%%%%%%%%%%%%%%%%%%%%%%%%%%%%%%%%%

In this section we give a proof of Proposition~\ref{prop:fewsleep}.
Our strategy is as follows.
$\sn$ is the number of blocks that have a frozen particle at the end of the carpet-hole toppling procedure, so it can be written as the sum of indicator functions of there being a frozen particle in each block. One possible approach to proving Proposition~\ref{prop:fewsleep} is to show that these events have small probability and are roughly independent. Unfortunately these events appear to be dependent in some extremely complicated way.
We get around this by writing $\sn$ as a sum of random variables in a different manner. To do this we will need to make some delicate choices of the instructions in the site-wise representation.
The choices of the instructions (and some associated $\sigma$-algebras that the choices generate) are defined in Sections~\ref{sami} and~\ref{tamini}.
Using these choices we show how to write $\sn$ as a sum in Lemma~\ref{eq:totalunavailable} of Section~\ref{coarse}.
Then in Section~\ref{chemtrails} we state Lemma~\ref{lem:singleblock} which shows that the random variables in the sum are only weakly dependent. This allows us to prove Proposition~\ref{prop:fewsleep}.
We delay the proof of Lemma~\ref{lem:singleblock} until Section~\ref{omari}.

\subsection{Coupling and filtration} \label{sami}

We start by describing a coupling which will introduce some statistical independence between the procedure on different blocks and allow us to disentangle their interaction to some extent.

For each $y \in \Z$ in any block, let $\xi^y = (\xi^y_k)_{k \in \N}$ be an i.i.d.\ sequence of instructions. When we topple the hot particle at site $y$, the particle moves according to the next unused instruction in this stack. 
For sites $y \in \Z$ in the transit regions, set $\xi^y = (\xi^{y, L}, \xi^{y, R})$, where each coordinate is an independent copy of the same stack of instructions. When the hot particle reaches a site $y$ in a transit region, we check which block that particle visited last. If it was the block to the left of $y$, then the particle moves according to next unused instruction in the $L$ stack, and otherwise it uses the next unused instruction in the $R$ stack. 

%This distinction allows us to de-couple the stacks corresponding to different blocks.

Consider the following filtration associated to the carpet-hole toppling procedure, which encapsulates the left-to-right procedure: 
\begin{equation}
\cF_i = \sigma[\{\xi^y: y \in (0, iK + a]\} \cup \{\xi^{x,L}: x \in (iK+a, iK+K-a)\}]
\end{equation}
In words, $\cF_i$ consists of all the information of all stacks in blocks up to block~$i$, plus all the stacks associated to particles exiting block~$i$ at its right endpoint until they reach block~$i+1$. 
So $ \cF_i $ tells us about particles that go from block~$i$ to $i+1$ but not vice versa.

%\comment{C: before or until? This definition is clearly carefully crafted. We should be explicit when we using that $\cF_i$ tells us about particles that go from block~$i$ to $i+1$ but not vice versa.}

\subsection{Relevant observables in each block} \label{tamini}

For any $m \in \N \cup \{0\}$, consider the configuration 
\begin{equation}
\label{eq:adding}
\eta_0^{\rm neat}(m, i) = \eta_0^{\rm neat} + m \delta_{iK+a}
\end{equation}
obtained by adding $m$ particles to the right boundary of block~$i$. Define 
%the $\cF_i$-measurable random variable
$L_i^n(m)$ as the total number of left-step instructions executed from the stack $ \{\xi^{(i-1)K + a + 1, R}_k\}_k $
after performing the carpet-hole toppling procedure on blocks~$ 1, 2, \ldots, n $ starting from initial configuration
$ \eta_0^{\rm neat}(m, n) $.
So
$L_i^n(m)$ is
the number of times some particle is emitted from the $i$th block and arrives at the right boundary of the $(i-1)$st block if 
we apply the carpet-hole toppling procedure to the initial configuration
$ \eta_0^{\rm neat}(m, n) $.

Also, let $S_i^n(m)$ denote number of free particles in block~$ i $ and in state $ S \text{ after stabilizing blocks } 1, 2, \ldots, i $ starting from initial configuration $ \eta_0^{\rm neat}(m, n) $ according to the carpet-hole toppling procedure on $ \dn $.

%\comment{L: I was going to implement the shorthand elimination, and it just felt strange to write $ L_i(m_i,i) $ etc all over the subsequent arguments. I'm proposing this superscript instead. Also.}
When we consider $ n=i $, we may write $ S_i $ for $ S_i^i $ and $ L_i $ for $ L_i^i $.

%\comment{L: I would rather state the sentence below closer to where we use it. It feels a bit off topic here.}%
%\sout{Note that $L_i(m,i)$ is a $\cF_i$-measurable random variable.}

\subsection{Coarse-grained particle flow} \label{coarse}

This description of individual blocks is completed by considering how they interact.
We consider the flow of particles between blocks, or more precisely the right-to-left flow.
Define the random ``coarse-grained odometer'' vector $\boldsymbol{M}^n = (M^n_0, M^n_1, \ldots, M^n_n)$, by letting $ M^n_i $ denote the
number of times a free particle is emitted from block
$ i+1 $
to block
$ i $
after running this procedure on $ \dn $. Although we do not use the following fact it is worthwhile to notice that this definition implies that $ M^n_n = 0$.

Fix any sequence of instructions.
When we apply the carpet-hole toppling procedure on $ \dn $ starting from configuration $ \eta_0^{\rm neat} $ (equivalently $ \eta_0^{\rm neat}(0,n) $) we get the following vectors:
\begin{itemize}
\item the coarse-grained odometer vector $(M^n_0, M^n_1, \ldots, M^n_n)$, 
\item the frozen particle vector $(S_0^n(0), S_1^n(0), \ldots, S_n^n(0))$ and 
\item the left emissions vector
$(L_0^n(0), L_1^n(0), \ldots, L_n^n(0))$.
\end{itemize}
For each $i=1,\dots,n$ fixed, we can also run the carpet-hole procedure on $ \di $ starting from configuration $ \eta_0^{\rm neat} (M^n_i,i)$ using the same stacks of instructions. This gives us the sequences $ S_i(M^n_i) $ and $ L_i(M^n_i) $.
The next lemma relates these vectors.

%\comment{L: Apply the instructions to the carpet-hole toppling procedure, or run the carpet-hole toppling procedure using these stacks of instructions? \blue{J: I chose the latter...}}

\begin{lemma}
\label{lemma:addsame}
%Fix any sequence of instructions.
%Run the carpet-hole toppling procedure with the given instructions on $ \dn $ starting from configuration $ \eta_0^{\rm neat}(0,n) $. 
%Also, for each $i=1,\dots,n $ run the procedure with the same sequence of instructions on $ \di $ starting from configuration $ \eta_0^{\rm neat} (M^n_i,i)$.
% to get $ S_i(M^n_i) $ and $ L_i(M^n_i) $. For each~$ i $, 
%the number of frozen free particles inside block~$ i $ equals 
%$ S_i(n,0)=S_i(M^n_i) $ and 
%the number of particles emitted from block~$ i $ to $ iK-K+a $ equals 
%$ S_i(n,0)=L_i(M^n_i) $.
For each~$ i \in \{1, \dots ,n\}$, a.s.,
 \begin{equation}\text{ $ S_i^n(0)=S_i(M^n_i) $ \ \ \ \ and \ \ \ \ \
$ L_i^n(0)=L_i(M^n_i) $.}
\end{equation}
\end{lemma}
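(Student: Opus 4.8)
The statement is purely pathwise: for each fixed realization of the instruction stacks the quantity $M^n_i$ is determined, and I would prove the two identities deterministically for every realization, so that the ``a.s.'' is automatic. The plan is to compare two runs driven by the \emph{same} stacks: run~$\mathrm{A}$, the carpet-hole procedure on $\dn$ started from $\eta_0^{\rm neat}$; and run~$\mathrm{B}$, the procedure on $\di$ started from $\eta_0^{\rm neat}(M^n_i, i)$. I want to show that the entire history of run~$\mathrm{A}$ restricted to the sites $\leq iK+a$ coincides with that of run~$\mathrm{B}$, so that in particular the frozen count in block~$i$ and the number of left-step instructions drawn from $\{\xi^{(i-1)K+a+1,R}_k\}_k$ agree, which is exactly $S_i^n(0)=S_i(M^n_i)$ and $L_i^n(0)=L_i(M^n_i)$.

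First I would use the coupling of Section~\ref{sami} to isolate block~$i$. Because a particle traversing a transit region draws from the $L$-stack when it entered from the left and from the $R$-stack when it entered from the right, the instructions consumed inside blocks $1,\dots,i$ and in the left emissions of block~$i$ are never shared with particles arriving from block~$i+1$: those arrivals consume only $R$-stacks of the transit region to the \emph{right} of block~$i$, which lie outside $\cF_i$. Consequently the only way the region to the right of block~$i$ influences blocks $1,\dots,i$ is through the particles it injects at the right boundary $iK+a$, and by definition the number of such injections is exactly $M^n_i$. Right emissions of block~$i$, on the other hand, leave block~$i$ in both runs — they pass to block~$i+1$ in $\mathrm{A}$ and exit $\di$ in $\mathrm{B}$ — so they are treated identically.

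Next I would exploit the left-most priority policy to pin down the timing. Since a new hot particle is always taken from the left-most block containing a thawed free particle, block~$i+1$ is never activated in run~$\mathrm{A}$ while any of blocks $1,\dots,i$ still has a thawed free particle; hence every injection into block~$i$ from the right occurs at an instant when blocks $1,\dots,i$ are simultaneously dormant. This lets me read off the restriction of run~$\mathrm{A}$ to blocks $1,\dots,i$ as a legitimate carpet-hole run on $\di$ in which the $M^n_i$ extra particles are fed in at $iK+a$ one at a time, each at a moment when block~$i$ is dormant, rather than all at once at the start as in run~$\mathrm{B}$.

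The main obstacle is therefore a commutation statement: injecting the $M^n_i$ particles gradually at dormant times produces the same block-$i$ toppling sequence as placing all of them at $iK+a$ from the outset. I would prove this by induction on the number of injected particles, the key point being that extra particles resting at $iK+a$ are inert until block~$i$ becomes the left-most active block \emph{and} its higher-priority free particles (those at $iK$ and $iK-a$) have been exhausted; this is precisely what the in-block priority rule guarantees, since it places $iK+a$ last. One then checks that ``add one particle at $iK+a$ and run to the next dormant configuration'' is a single well-defined operation on dormant block-$i$ configurations, applied the same number of times and in the same states in both runs, carefully matching the two cases of the attempted-emission rules (whether or not a frozen particle is already present at $iK+a$ when the new particle arrives). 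Since both $S_i$ and $L_i$ are functionals only of block~$i$'s toppling sequence together with the left-transit $R$-stack usage, their equality in the two runs follows at once.
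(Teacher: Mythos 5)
Your proposal is correct and follows essentially the same route as the paper's proof: a pathwise comparison of the two runs driven by the same stacks, using the two-stack ($L$/$R$) coupling in the transit regions to ensure no instruction sharing, the left-most priority policy to ensure particles from block~$i+1$ are injected only when blocks $1,\dots,i$ have no thawed particles, and the lowest priority of particles at $iK+a$ to show that feeding the $M^n_i$ particles in one at a time is equivalent to placing them there at the start. Your explicit induction on the number of injected particles is just a more formal packaging of the commutation step that the paper argues directly.
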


%\comment{C: Should we bother to define which space our sequence of instructions lives in? \blue{J: this is not a probabilistic statement. Or are you suggesting 'fix any sequence of instructions' isn't complete, because we don't say which sequences we're talking about?} C: The latter. L: I think it is fine to make a statement that holds for every sequence of instructions. \blue{J: maybe we should say something like 'any sequence of stack instructions that "jibes" with the carpet-hole procedure'?} L: Let's make it random.}
%\comment{C: This needs to include some mention of the stacks. If we have any set of stacks that yields $M^n_i$ on $\dn$ and we use the same stacks on $D_i$ with initial configuration $ \eta_0^{\rm neat}(M^n_i, i) $ we get that $S_i(M^n_i)$ equals whatever it should be.}

\begin{proof}
The lemma is tautological for $ i=n $.
Fix $ i \in \{1,\dots,n-1\} $.
The carpet-hole toppling procedure of Section~\ref{sub:procedure} chooses the next hot particle from blocks $ 1,\dots,i $ whenever possible.
When it is possible, the choice depends only on the configuration of particles (carpet, hole, frozen, thawed) in these blocks.
When it is not possible, the procedure will either stop, or the hot particles will be chosen in blocks $ i+1,\dots,n $ until the very moment when a successful emission from block~$ i+1 $ to block~$ i $ occurs.
From this point on, the dynamics will again be restricted to blocks $ 1,\dots,i $ until the next time when there are no thawed particles in this region, and so on.

Among the sequence of topplings performed when following this the carpet-hole toppling procedure on $ \dn $, consider only the subsequence that is related to emissions starting in blocks $ 1,\dots,i $.
From the above considerations, we see that this subsequence is only affected by what happens in blocks $ i+1,\dots,n $ by the input of particles arriving at site $ iK+a $.
If in total there are $ M^n_i $ particles arriving from block~$ i+1 $, these particles are the ones with least priority to be chosen as the next hot particle among all other thawed particles.
So, to determine what happens during emissions in blocks~$ 1,\dots,i $, it does not matter whether these $ M^n_i $ particles were being sent back from blocks $ i+1,\dots,n $ one by one and upon request, or whether they had been sitting at $ x=iK+a $ all the time waiting for their turn to become hot.

The only other way that emissions in block~$ i+1 $ could possibly affect the outcome of the procedure on blocks $ 1,\dots,n $ is by using instructions from the stacks $ \xi $ in the transit region.
But this was taken care of by putting two stacks on each site of transit regions.

Since
the procedure on $ \dn $ starting from $ \eta_0^{\rm neat} $
results in the same sequence of topplings as
the procedure on $ D_i $ starting from $ \eta_0^{\rm neat}(M_i^n,i) $
if we ignore the topplings performed during attempted emissions in blocks $ i+1,i+2,\dots,n $, and these sequences determine $ S_i^n(0) $ and $ S_i(M_i^n) $ in the same way, these numbers must coincide.
Similarly for $ L_i^n(0) $ and $ L_i(M_i^n) $.
\end{proof}

%\comment{C: If we keep this notation it should probably be used in the proof. C: Not yet. L: It could be rephrased to make the connection more explicit, we just need to think of how. \blue{J: are we talking about the $L_i^n(m)$ notation, or the $\sn$ notation?}}

The connection between the dynamics of all blocks and the observable $ \sn $ is the following.
\begin{lemma}
\label{lemma:totalfrozen}
Almost surely,
\begin{equation}
\label{eq:totalunavailable}
\sn = \sum_{i=1}^n S_i(M^n_i).
\end{equation}
\end{lemma}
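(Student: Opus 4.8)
The plan is to combine the two previous lemmas. The definition $\sn = \sum_{i=1}^n \mathbf{1}[\text{block } i \text{ has a frozen particle at the end}]$ is really a count of frozen free particles over all blocks, and by property~\ref{prop:atmostone} each block contributes at most one. The quantity $S_i^n(0)$ counts exactly the frozen free particles remaining in block~$i$ after blocks $1,\dots,i$ have been stabilized, starting from $\eta_0^{\rm neat}$. So the first thing I would verify is that stabilizing the \emph{entire} region $\dn$ and then reading off the frozen particles in block~$i$ gives the same count as stabilizing only blocks $1,\dots,i$ and reading off block~$i$. That is, I would first establish the decomposition
\begin{equation}
\sn = \sum_{i=1}^n S_i^n(0).
\end{equation}

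For this step I would argue that once block~$i$ has been stabilized in the sense that there are no thawed free particles in blocks $1,\dots,i$, the frozen free particle (if any) sitting at the right boundary $iK+a$ of block~$i$ can only be disturbed by a \emph{successful} emission from block~$i+1$ arriving at $iK+a$; and by the left-most priority policy, whenever such a particle arrives it reactivates the frozen particle, the block is reprocessed, and the analysis returns to the same local configuration class governed by Properties~\ref{prop:first}--\ref{prop:last}. The key point, which I would spell out, is that the final frozen state of block~$i$ under the full procedure on $\dn$ is determined entirely by the topplings associated to emissions originating in blocks $1,\dots,i$ together with the total inflow from block~$i+1$; this is precisely the content isolated in the proof of Lemma~\ref{lemma:addsame}. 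Thus each summand in the full-region count equals $S_i^n(0)$.

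The second and final step is a direct substitution. Lemma~\ref{lemma:addsame} gives, for each $i \in \{1,\dots,n\}$ and almost surely, the identity $S_i^n(0) = S_i(M^n_i)$, where the right-hand side is the frozen count obtained by running the procedure on $\di$ from the modified configuration $\eta_0^{\rm neat}(M^n_i,i)$ that front-loads the correct inflow $M^n_i$ at the boundary. Summing this identity over $i$ and using the decomposition from the first step yields
\begin{equation}
\sn = \sum_{i=1}^n S_i^n(0) = \sum_{i=1}^n S_i(M^n_i),
\end{equation}
which is exactly~\eqref{eq:totalunavailable}.

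The main obstacle, as I see it, is the first step rather than the second: one must argue carefully that summing the per-block observables $S_i^n(0)$ genuinely recovers $\sn$, i.e.\ that no frozen particle is miscounted because of the way emissions from the right can thaw and refreeze a particle in block~$i$. This requires tracking that the left-most priority policy forces the dynamics to completely resolve blocks $1,\dots,i$ between successive arrivals from block~$i+1$, so that the ``frozen at the end'' status is well-defined per block and matches $S_i^n(0)$. Once the coupling with two independent stacks in each transit region (introduced in Section~\ref{sami}) is invoked to guarantee that emissions from block~$i+1$ do not corrupt the instruction stacks used by blocks $1,\dots,i$, the decomposition is robust and the substitution via Lemma~\ref{lemma:addsame} is immediate.
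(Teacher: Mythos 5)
Your proposal is correct and takes essentially the same route as the paper: the paper's own proof consists precisely of your two steps, namely the definitional decomposition $\sn = \sum_{i=1}^n S_i^n(0)$ (each frozen free particle remaining at the end of the procedure sits in exactly one block, at its right boundary by~\ref{prop:frozenatikpa}) followed by the term-by-term substitution $S_i^n(0) = S_i(M^n_i)$ from Lemma~\ref{lemma:addsame}. The only difference is one of emphasis: the paper treats the first step as immediate from the definition of $\sn$, whereas you spend effort justifying it; your discussion correctly recognizes that all of the genuine work (priority policy, front-loading the inflow $M^n_i$, the two-stack coupling in transit regions) is already encapsulated in Lemma~\ref{lemma:addsame} rather than needing to be redone here.
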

\begin{proof}
%[Sketch of proof]
This follows straight from the definition of $ \sn $ and Lemma~\ref{lemma:addsame}.
\end{proof}

\subsection{Mass balance equations and proof of the main estimate} \label{chemtrails}

We finally prove Proposition~\ref{prop:fewsleep}.
Rather than working with the complicated structure of $\boldsymbol{M}^n$, we make a bound over all deterministic vectors $\boldsymbol{m}$ that satisfy certain constraints.
A vector $\boldsymbol{m} = (m_0, m_1, \ldots, m_n)$ is said to satisfy the \emph{mass balance equations} if 
\begin{equation}
\label{eq:mbe}
 L_i(m_i) = m_{i-1} \text{ for } i = 1, 2, \ldots, n.
\end{equation}
Note that $\boldsymbol{M}^n$
always satisfies the mass balance equations.
Note also that $ M^n_0 \leq \frac{n}{2} $.
To prove Proposition~\ref{prop:fewsleep}, we rely on the following estimate for a single block.
Note the conditioning on $ \cF_{i-1} $.
\begin{lemma}
\label{lem:singleblock}
%Recall the notation from the previous section.
%Recall the $ \sigma $-fields $ \cF_i $ and the random functions $ S_i $ and $ L_i $ defined in the previous section.
Given $ \lambda<\infty $, there exist $ a $ and $ K $ such that the following holds.
For every $ n\in 2\N $ and every $ = 1, 2, \ldots, n$, a.s.,
\begin{equation} \label{nwa}
\sup_{\ell \geq 0} \sum_{m_i = 0}^\infty \E \left[ e^{16S_i(m_i)} 1\{L_i(m_i) = \ell \} \,\middle|\, \cF_{i-1} \right] < e^3.
\end{equation}
\end{lemma}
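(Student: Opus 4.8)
The plan is to analyze block~$i$ in isolation, treating the conditioning on $\cF_{i-1}$ as fixing the (random) process by which particles are fed back into block~$i$ across its left boundary $iK-a$, while the stacks $\xi^y$ with $y$ inside block~$i$, the right-directed stacks of the transit region $((i-1)K+a,iK-a)$, and the left-directed stacks of the transit region to the right of block~$i$ remain fresh and independent of $\cF_{i-1}$. I would first make precise that $S_i(m)$ and $L_i(m)$ are functions of these fresh stacks together with the left-boundary feedback, which is $\cF_{i-1}$-measurable given the emissions block~$i$ itself produces; the target is a bound valid for \emph{every} realization of $\cF_{i-1}$, i.e.\ uniform over the left environment. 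Since at most one free particle can be frozen in a block (Property~\ref{prop:atmostone}), $S_i(m)\in\{0,1\}$, so $e^{16S_i(m)}=1+(e^{16}-1)S_i(m)$ and the sum splits as
\begin{equation}
\sum_{m \geq 0} \E\!\left[ e^{16 S_i(m)}\, 1\{L_i(m) = \ell\} \,\middle|\, \cF_{i-1} \right]
= A_\ell + (e^{16}-1)\, B_\ell ,
\end{equation}
where $A_\ell = \sum_{m\geq 0}\p(L_i(m)=\ell\mid\cF_{i-1})$ and $B_\ell = \sum_{m\geq 0}\p(L_i(m)=\ell,\ S_i(m)=1\mid\cF_{i-1})$.

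To control $A_\ell$, note that it is the conditional expectation of the run length $N_\ell=|\{m:L_i(m)=\ell\}|$, which is a genuine (possibly empty) interval because $L_i(m)$ is nondecreasing in $m$. I would prove this monotonicity by a coupling that introduces the extra particle last, combined with the abelian-type reasoning behind Lemma~\ref{lemma:addsame}. The bound then follows from showing that, each time the input is raised by one, the conditional probability that a further left-emission is eventually produced is bounded below by a universal constant, uniformly over the environment; this yields geometric tails for $N_\ell$ and places $A_\ell$ comfortably below $e^3$. Intuitively each processed particle exits roughly symmetrically to the left or to the right, so the expected run length is $O(1)$ with ample slack.

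The heart of the argument is $B_\ell$, and the key object is the motion of the hole. While a hot particle sits at the empty hole it falls asleep with probability $\tfrac{\lambda}{1+\lambda}$, advancing the hole one step to the right; an excursion begun by a leftward jump that returns instead retracts the hole to the leftmost site just visited; a freeze occurs precisely when the hole reaches $iK+a$. Modelling the hole position as a random walk on $[iK,iK+a]$, the rightward steps have unit size and rate $\sim\tfrac{\lambda}{1+\lambda}$, whereas the leftward retractions inherit the heavy tail of a simple-random-walk excursion (retraction $\geq k$ with probability $\sim 1/k$, truncated at the width $a$), contributing mean leftward displacement of order $\log a$. Choosing $a=a(\lambda)$ exponentially large in $\lambda$ (so that $\log a$ exceeds the sleeping bias $\sim 2\lambda$) makes the net drift leftward, whence reaching $iK+a$ from near $iK$ is a large-deviation event of probability at most $e^{-ca}$ for some $c=c(\lambda)>0$; enlarging $a$ further forces $e^{16}e^{-ca}$ to be negligible. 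I would then argue that $S_i(m)=1$ can persist only over a controlled range of inputs $m$, since a frozen particle is eventually released once enough further particles traverse the block (the unfreezing rule of Case~1), so that the number of contributing $m$ times the per-attempt freeze probability keeps $(e^{16}-1)B_\ell$ negligible. The role of $K\gg a$ is to ensure that excursions of the hot particle typically return to the hole region rather than exiting prematurely, so that the hole walk is the correct effective description; atypical exits are absorbed into constants. These choices are made in the order $a$ then $K$ as in~\eqref{aclu}, and $A_\ell+(e^{16}-1)B_\ell<e^3$ follows.

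The main obstacle I anticipate is making the hole-walk description rigorous while keeping all estimates uniform over the left environment encoded by $\cF_{i-1}$: the feedback of particles returning from block~$i-1$ couples block~$i$ to the conditioning in a delicate way, and one must verify that both the exponential freeze bound and the geometric run-length bound survive this conditioning for the worst-case environment. In particular, balancing the strong rightward sleeping bias against the heavy-tailed leftward retractions of the hole is the quantitative crux, and it is exactly this balance that dictates the exponentially large choice of $a$ in terms of $\lambda$ and ultimately powers the Chernoff estimate for $\p(\sn\geq n/4)$ in Proposition~\ref{prop:fewsleep}.
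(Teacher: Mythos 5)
Your proposal is correct in outline and takes essentially the same approach as the paper: the split using $S_i(m)\in\{0,1\}$ (Property~\ref{prop:atmostone}), the geometric bound on the run $\{m : L_i(m)=\ell\}$ coming from a uniform lower bound on the probability of a left emission (the paper's Lemma~\ref{walklemma3}), and the modelling of the hole as a random walk whose leftward jumps carry the $1/k$ excursion tail---so that taking $a$ exponentially large in $\lambda$ and $K=a^2$ as in \eqref{aclu} gives strongly negative drift and exponentially unlikely freezing (the paper's chain $W_t$ and Lemmas~\ref{walklemma2} and~\ref{walklemma1})---are exactly the paper's ingredients. The only real difference is bookkeeping: you index by the particle count $m$ and invoke monotonicity of $L_i(m)$ via coupling, while the paper re-indexes by attempted emissions $j$ and controls the joint event ``frozen while $L=\ell$'' through the stopping times $\tau_{\ell-2},\tau_{\ell},\tau_{\ell+1}$; these amount to the same observation.
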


The proof is given in the next section.

\begin{proof}
[Proof of Proposition~\ref{prop:fewsleep}]
Recall that $\sn$ is the number of frozen free particles remaining in $\dn$ after we apply the the carpet-hole toppling procedure to this region. 
By decomposing over the $ M^n_i $'s and using Lemma~\ref{lemma:totalfrozen} we get
\begin{multline}
\E e^{16\sn}
 \\ =
\E \bigg[\sum_{m_0=0}^{ n/2 } \sum_{m_1} \cdots \sum_{m_{n}} \prod_{i=1}^{n} e^{16S_i(m_i)} 1\{m_i = M^n_i\} 1\{L_i(m_i) = m_{i-1}\} \bigg]
 \\ \leq
\E \bigg[\sum_{m_0=0}^{ n/2 } \sum_{m_1} \cdots \sum_{m_{n}} \prod_{i=1}^{n} e^{16S_i(m_i)} 1\{L_i(m_i) = m_{i-1}\} \bigg]
.
\end{multline}
We will inductively show that
$$
\E \bigg[\sum_{m_0=0}^{ n/2 } \sum_{m_1} \cdots \sum_{m_{\J}} \prod_{i=1}^{\J} e^{16S_i(m_i)} 1\{L_i(m_i) = m_{i-1}\} \bigg]\leq n e^{3\J}.
$$
For $\J=0$ this sum is $1+n/2$ so the base case of our induction is true.
Assume that the inequality is true for $\J-1$.
By conditioning on $\cF_{\J-1}$ and using Lemma~\ref{lem:singleblock}, we get
\begin{eqnarray}
\lefteqn{\E \bigg[ \sum_{m_0} \sum_{m_1} \cdots \sum_{m_{\J}} 
 \prod_{i=1}^{\J} e^{16S_i(m_i)} 1\{L_i(m_i) = m_{i-1}\} \bigg] }\hspace{.5in}&& \nonumber \\
& \leq & \E \bigg[\sum_{m_0} \sum_{m_1} \cdots \sum_{m_{\J-1}} \prod_{i=1}^{\J-1} e^{16S_i(m_i)} 1\{L_i(m_i) = m_{i-1}\} \nonumber\\
&&\hspace{.5in} \cdot \sup_{\ell \geq 0} \sum_{m_\J=0}^{\infty} \E\left[e^{16 S_\J(m_\J)} 1\{L_\J(m_\J) = \ell\} \middle| \cF_{\J-1}\right] \bigg] \nonumber\\ 
& \leq & \E \left[ e^3 \sum_{m_0} \sum_{m_1} \cdots \sum_{m_{\J-1}} \prod_{i=1}^{\J-1} e^{16S_i(m_i)} 1\{L_i(m_i) = m_{i-1}\}\right] \nonumber\\
& \leq & n\cdot e^{3} e^{3(\J-1)} \label{ftp}\nonumber = n \cdot e^{3\J} \nonumber
.
\end{eqnarray}
Using Markov's inequality on our previous calculation with $\J=n$ we obtain 
\begin{align*}
\p(\sn \geq n / 4)
& = \p(e^{16\sn} \geq e^{4n}) \\
& \leq e^{-4n} \E e^{16\sn} \\
%& = e^{-4n} \E \left[ \sum_{\boldsymbol{m}} e^{16\sn} 1\{\boldsymbol{M}^n = \boldsymbol{m}\} \right] \\
%& = e^{-4n} \E \bigg[ \sum_{m_0} \sum_{m_1} \cdots \sum_{m_n} \\
%& \hspace{.5in} \cdot \prod_{i=1}^n e^{16S_i(m_i)} 1\{m_i = M^n_i\} 1\{L_i(m_i) = m_{i-1}\} \bigg] \\
& \leq n e^{-4n} e^{3n} 
\leq e^{-cn}
%\qedhere
\end{align*}
for large $n$.
%The last equality holds by~\eqref{eq:totalunavailable} and~\eqref{eq:mbe}.
\end{proof}

%%%%%%%%%%%%%%%%%%%%%%%%%%%%%%%%%%
\section{Single block estimate} \label{omari}
%%%%%%%%%%%%%%%%%%%%%%%%%%%%%%%%%%

In this section we prove Lemma~\ref{lem:singleblock}, the single block estimate.
The key idea is that, however large $ \lambda $ may be, the expected length of a simple random walk excursion is larger.

\subsection{Re-indexing the $L$ and $S$ counters} 

First we fix a block~$i$.
Then we let $j$ be the number of times (counted with multiplicity) that a new particle in block~$i$ has been designated as the the hot particle. We refer to each new designation of the hot particle and its subsequent moves as an \emph{attempted emission}. We re-index the left-emission and frozen particle counters $ L_i(m) $ and $ S_i(m) $ by respectively defining (with a slight abuse of notation) $L(j)$ and $S(j)$ as the total number of particles emitted from block~$ i $ to block~$ i-1 $ and the number of frozen free particles present at block~$ i $, after $ j $ attempted emissions in block~$i$, starting from configuration $ \eta_0^{\rm neat}(\infty, i) $.
%\comment{L: updated definition}

Additionally, let $ \w(j) \in [0,a] $ denote the position of the hole after the $j$th attempted emission.
Note that
\begin{equation}
\label{eq:frozenifata}
\w(j)=a
\quad
\Longleftrightarrow
\quad
S(j)=1
.
\end{equation}

%and
%\begin{equation}
%\label{eq:sleepnotbad}
%S(j)=0, S(j+1) = 1
%\quad
%\Longrightarrow
%\quad
%L(j+1)=L(j)
%.
%\end{equation}

Let $ \cG_j $ denote the $ \sigma $-field generated by all the information revealed up to the end of the $ j $th emission attempt in block~$ i $ plus the information encoded by $ \cF_i $. Denote $ \P[\, \cdot\, |\cF_{i-1}] $ by $ \tP $.

We now choose the sizes of the blocks and transit regions in the following way.
Given $\lambda > 0$, set 
\begin{equation} \label{aclu}
a = 12 \lceil e^{100(\lambda +1)}\rceil
\qquad
\text{ and }
\qquad
K = a^2.
\end{equation}

The following lemmas, which estimate deviations for the hole process, will be combined to prove Lemma~\ref{lem:singleblock}. 

\begin{lemma}
\label{walklemma2} For any $j$, the event $\w(j-1) \in [0, a/2] \cup \{a\}$ is $\cG_{j-1}$ measurable %. On this event, we have the almost sure bound
and, almost surely,
\begin{equation}
\label{eq:usualtransition}
%\tP( \w(j)>a/2 \,|\, \cG_{j-1} ) < C \frac{a}{K}
\tP( \w(j)>a/2 \,|\, \cG_{j-1} )1\{\w(j-1) \in [0, a/2] \cup \{a\}\} < e^{-100}.
%4 \frac{a}{K} 
\end{equation} 
\end{lemma}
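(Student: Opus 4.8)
The plan is to treat $\w(\cdot)$ as a process adapted to the filtration $(\cG_j)$ and to control a single one of its steps.

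\emph{Measurability and reduction.} The position $\w(j-1)$ of the hole at the end of the $(j-1)$st attempted emission in block~$i$ is a deterministic function of the instructions consumed during those attempts together with $\cF_i$, hence it is $\cG_{j-1}$-measurable; in particular so is $1\{\w(j-1)\in[0,a/2]\cup\{a\}\}$ (note $12\mid a$, so $a/2\in\N$). It therefore suffices to bound $\tP(\w(j)>a/2\mid\cG_{j-1})$ separately on $\{\w(j-1)\in[0,a/2]\}$ and on $\{\w(j-1)=a\}$. From the description of an attempted emission, in one step the hole either advances exactly one site to the right (and only when the hot particle falls asleep while alone at the hole), moves to the left (when the hot particle makes an excursion to the left of the hole and returns), or does not move at all (when the particle is emitted). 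Consequently, on $\{\w(j-1)\le a/2-1\}$ one has $\w(j)\le \w(j-1)+1\le a/2$, so the event $\{\w(j)>a/2\}$ is empty; on the first piece only $\w(j-1)=a/2$ is relevant, and there $\{\w(j)>a/2\}$ is exactly the event that the hot particle falls asleep at the hole during attempt~$j$.

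\emph{The estimate.} To bound this sleeping probability I would use the key mechanism: away from the hole the hot particle walks as a simple symmetric random walk on the carpet and can never fall asleep, since it is never alone there, so it can only fall asleep during the moments it spends alone at the hole, while the expected length of an excursion it performs before being emitted to a neighbouring source — at distance of order $K=a^2$ — is far larger than $\lambda$. I would make this quantitative by pitting, for the walk started at $iK+a/2$, the exit probability of simple random walk on an interval of length of order $a^2$ against the chance of sleeping at the hole, and then insert the explicit values $a=12\lceil e^{100(\lambda+1)}\rceil$ and $K=a^2$ from \eqref{aclu} to obtain the bound $e^{-100}$. The case $\w(j-1)=a$ is dual: by \eqref{eq:frozenifata} the block then carries its unique frozen particle at $iK+a$, and $\{\w(j)>a/2\}$ forces that this particle is not released during attempt~$j$, i.e. (by the Case~1 rule) that the transiting hot particle fails to sweep all of $[iK,iK+a]$; this is again a first-passage estimate for a walk crossing an interval of length of order $K$, governed by the same choice of constants.

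\emph{Main obstacle.} The crux is to carry out these excursion and first-passage computations \emph{uniformly} over the $\cG_{j-1}$-measurable — and rather intricate — configuration of carpet and free particles left behind by the procedure: one must verify that conditioning on $\cG_{j-1}$ only freezes the past while leaving the relevant increments distributed as genuine simple random walk steps, and then track the constants through the excursion-length comparison and the choice \eqref{aclu} tightly enough that the resulting probability lands safely below $e^{-100}$. This single-step bound would then be combined with the companion hole-process lemmas, covering $\w(j-1)\in(a/2,a)$, to assemble the single-block estimate of Lemma~\ref{lem:singleblock}.
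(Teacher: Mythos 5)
Your measurability argument is fine, but your core reduction rests on a misreading of the time index $j$, and this breaks the proof. In the paper's re-indexing (Section~\ref{omari}), $\w(j)$ is the position of the hole at the end of the $j$th \emph{attempted emission}, and an attempted emission terminates only when a hot particle reaches a neighbouring block (success) or is frozen at $iK+a$ (failure). Within a single attempt the hole typically moves \emph{many} times: each time the hot particle falls asleep at the hole, the hole advances one site to the right, the adjacent carpet particle becomes the new hot particle, and the \emph{same} attempt continues; each time the hot particle makes a left excursion into the block and returns, the hole jumps to the left. Per step at the hole the particle sleeps with probability $\lambda/(1+\lambda)$ but is emitted with probability only of order $1/((\lambda+1)K)$, so the number of rightward hole moves during one attempt is typically of order $\lambda K$ --- far more than one. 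Consequently your claims that on $\{\w(j-1)\le a/2-1\}$ the event $\{\w(j)>a/2\}$ is empty, and that for $\w(j-1)=a/2$ it coincides with a single sleep event, are both false; under your reading the lemma would be nearly trivial, which is itself a warning sign.

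The actual content of the lemma is a drift analysis of the hole process on the time scale of individual steps. The paper introduces the jump distributions $Y_v$ and $\tilde Y_v$ of \eqref{eq:jumps} and \eqref{eq:jumpstwo}, shows in Lemma~\ref{cal} that the choice \eqref{aclu} makes the drift at most $-40$ once the hole is at position $\ge a/3$ (the leftward pull comes from the heavy-tailed excursion maximum $\p(Z=z)=\tfrac{1}{z(z+1)}$, whose truncated mean $\approx \log(a/3)/(2(\lambda+1))$ beats the sleep rate only because $a$ is exponentially large in $\lambda$), controls the number of steps $T_j$ per attempt in Lemma~\ref{pride}, and combines these via the Hoeffding bounds of Lemma~\ref{sultans} to show the hole cannot climb from $[0,a/2]$ past $a/2$ before the attempt ends. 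None of this machinery appears in your sketch, and the ``excursion length versus $\lambda$'' comparison you gesture at is a single-visit estimate that cannot see the accumulation of order $\lambda K$ sleep events within one attempt. Your treatment of the case $\w(j-1)=a$ (a gambler's-ruin estimate for the block carrying the frozen particle, with the hole resetting to $0$ when the sweep succeeds) is the one piece that matches the paper's argument.
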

Also, 
\begin{lemma}
\label{walklemma1}
For any $j$,
almost surely,
\begin{equation}
\label{eq:unlikelystate}
\tP( a/2 < \w(j) < a \,|\, \cG_{j-1} ) < e^{-100}.
%4 \frac{a}{K}
%\text{\normalfont{ a.s. }}
\end{equation}
\end{lemma}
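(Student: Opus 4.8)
The plan is to argue conditionally on $\cG_{j-1}$ and to split according to the initial hole position $\w(j-1)$, which is $\cG_{j-1}$-measurable. If $\w(j-1)\in[0,a/2]\cup\{a\}$, then $\{a/2<\w(j)<a\}\subseteq\{\w(j)>a/2\}$, so the desired bound is immediate from Lemma~\ref{walklemma2}. The whole content is therefore the case $\w(j-1)\in(a/2,a)$, where the hole already starts inside the band. Conditionally on $\cG_{j-1}$ the trajectory of the hot particle during the $j$th attempt is driven by the still-unused i.i.d.\ instructions on the sites of block~$i$ and on the half-stacks feeding into it; by the two-stack construction of Section~\ref{sami} it is independent of all data recorded for blocks $1,\dots,i-1$, so the problem becomes a within-block random-walk estimate. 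By \eqref{eq:frozenifata}, $\w(j)<a$ forces $S(j)=0$, so $\{a/2<\w(j)<a\}$ is precisely the event that the attempt ends in a \emph{successful} emission while the hole still lies in $(a/2,a)$.

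For this case I would organize the in-block dynamics into successive excursions of the hot particle away from the hole. Each excursion either reaches a neighbouring block — a successful emission, terminating the attempt with the hole left at its current ($<a$) site — or returns, moving the hole one step right (a sleep) or to the leftmost site visited (a returning leftward excursion); the attempt alternatively ends when a sleep pushes the hole to $a$ and freezes the particle. Since block~$i$ has width of order $K=a^{2}$, a gambler's-ruin estimate bounds the escape probability of any single excursion by $O(1/K)=O(1/a^{2})$, uniformly in the current hole position. Writing $N_{\mathrm{band}}$ for the number of excursions performed while the hole lies in $(a/2,a)$ before the attempt ends, and decomposing over which excursion carries the particle out of the block, one obtains
\begin{equation*}
\tP\big(a/2<\w(j)<a\,\big|\,\cG_{j-1}\big)\;\le\;\frac{C}{a^{2}}\,\E\big[N_{\mathrm{band}}\,\big|\,\cG_{j-1}\big].
\end{equation*}
It then remains to show that the hole spends little time in the band, i.e.\ $\E[N_{\mathrm{band}}\mid\cG_{j-1}]\ll a^{2}$.

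This last bound is the quantitative heart of the matter and the step I expect to be the main obstacle. The hole walk is pushed one site right at every sleep, but a returning leftward excursion from height $h$ relocates it left by about $\log h$ sites; between consecutive leftward excursions only of order $\lambda$ sleeps accumulate, whereas for $h\gtrsim a/2$ one has $\log h\approx\log a\approx 100\lambda\gg\lambda$. This is exactly the assertion that the expected length of a simple random walk excursion exceeds $\lambda$, and it endows the hole with a strong \emph{leftward} drift throughout the band. Making this precise — bounding a single sojourn in $(a/2,a)$ and ruling out repeated re-entries from below, uniformly in the starting point $\w(j-1)\in(a/2,a)$ — should give $\E[N_{\mathrm{band}}\mid\cG_{j-1}]\le e^{O(\lambda)}$, which is $\ll a^{2}$. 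Granting it, the choice $a=12\lceil e^{100(\lambda+1)}\rceil$ and $K=a^{2}$ turns the displayed inequality into $C\,a^{-2}e^{O(\lambda)}<e^{-100}$, which is the claim; the gambler's-ruin inputs above are by comparison routine.
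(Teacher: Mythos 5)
Your reduction to the case $\w(j-1)\in(a/2,a)$ via Lemma~\ref{walklemma2}, and your observation that a failure forces $\w(j)=a$ (by \eqref{eq:frozenifata}) so that only emissions matter, both match the paper's proof exactly; your drift heuristic (excursion reach $\approx\log h$ beats the $\approx\lambda$ sleeps between returning left excursions) is precisely the paper's Lemma~\ref{cal}. The occupation-time inequality $\tP(a/2<\w(j)<a\mid\cG_{j-1})\le Ca^{-2}\,\E[N_{\mathrm{band}}\mid\cG_{j-1}]$ is also valid, since the per-step emission probability is uniformly $O(1/((\lambda+1)K))$. The genuine gap is that you have deferred all of the quantitative content to the step you yourself flag as the main obstacle, $\E[N_{\mathrm{band}}\mid\cG_{j-1}]\le e^{O(\lambda)}$, and this does not follow from the drift alone. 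A naive supermartingale/Wald argument fails: an attempted emission lasts of order $(\lambda+1)K=(\lambda+1)a^{2}$ steps in expectation, so without re-entry control the best one gets is $\E[N_{\mathrm{band}}]=O((\lambda+1)a^{2})$, which turns your displayed bound into $O(1)$ --- useless. To get $O(a)$ you must show that once the hole falls to $[0,a/3]$ it essentially never climbs back above $a/2$: per excursion above $a/3$ this is an up-crossing probability at most $e^{-a}$ (a Hoeffding bound for the dominating jump law $\tilde Y_{a/3}$ --- the paper's Lemma~\ref{sultans}), and the number of such excursions must be capped by an a priori bound on the attempt length (the paper's Lemma~\ref{pride}, giving $T_j\le a^{3}$ with probability at least $1-1/a$). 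Neither estimate appears in your proposal, and both (or an equivalent Lyapunov-function argument) are indispensable.

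For comparison, the paper assembles the same two ingredients without ever computing an expected occupation time: it splits the emission event into (i) an emission within the first $a/2$ steps, of probability at most $1/(4a)$ by a union bound over steps (second half of Lemma~\ref{pride}); (ii) no emission in the first $a/2$ steps while the hole stays above $a/3$, of probability at most $e^{-a}$ by Lemma~\ref{sultans}; and (iii) the hole reaching $[0,a/3]$ but later climbing back above $a/2$, of probability at most $a^{3}e^{-a}+1/a$ by combining Lemmas~\ref{sultans} and~\ref{pride}. The total is below $4/a<e^{-100}$ by the choice \eqref{aclu}. Your route could be completed to give the same $O(1/a)$ bound (first sojourn of expected length $O(a)$ by optional stopping with drift $-40$; re-entries contributing $o(1)$ by the up-crossing and attempt-length bounds), but as written the proposal asserts rather than proves its key estimate.
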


Additionally, the probability of emitting to the left for every two attempted emissions is bounded from below. 

\begin{lemma} \label{walklemma3} For any $j$, the following almost sure bound holds:
\begin{equation}
\tP( L(j+2)>L(j) \,|\, \cG_{j} ) \geq \frac{1}{3}.
\end{equation}
%\comment{J: $\tau_l$ hasn't been defined yet.} 
%\comment{L: Reverted the lemma back to its simple version after Jacob's remark that $ \tau $ hasn't been defined yet. The proof below has been slightly changed to accommodate this. Please check. \blue{J: looks good}}
\end{lemma}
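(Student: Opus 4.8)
The plan is to split on whether block~$i$ already contains a frozen free particle at time~$j$, using \eqref{eq:frozenifata} to phrase this as $S(j)=1$ (equivalently $\w(j)=a$) versus $S(j)=0$. The mechanism driving the bound is that a frozen particle \emph{fills the hole}: once block~$i$ has a frozen free particle at $iK+a$, the next attempted emission falls under the first case of Section~\ref{sub:procedure} (a frozen particle present), during which every site of the block is occupied, so no sleep instruction ever takes effect and the hot particle performs an unobstructed simple random walk on $[(i-1)K+a,(i+1)K-a]$ until absorbed at one of the two block boundaries. Since it starts within distance $a$ of $iK$ while the two absorbing boundaries sit at distance $K-a=a^2-a$ on either side, gambler's ruin gives left-emission probability at least $q_0:=\tfrac12-\tfrac{1}{2(a-1)}$, which exceeds $\tfrac13$ for the (enormous) value of $a$ fixed in \eqref{aclu}.

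When $S(j)=1$ this already settles the claim: attempted emission $j+1$ is of the first case, so $\tP(L(j+1)>L(j)\mid\cG_j)\geq q_0>\tfrac13$, and a fortiori $\tP(L(j+2)>L(j)\mid\cG_j)>\tfrac13$.

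When $S(j)=0$, attempted emission $j+1$ is of the second case and ends in one of three ways; write $p_L,p_R,p_F$ for its $\cG_j$-conditional probabilities of ending in a left emission, a right emission, or a freeze. Each emission terminates a.s., since every visit of the hot particle to the hole carries probability at least $\tfrac{1}{2(1+\lambda)(K-a)}$ of an immediate emission, so $p_L+p_R+p_F=1$. If emission $j+1$ freezes, block~$i$ acquires a frozen particle, whence emission $j+2$ is of the first case and emits left with conditional probability at least $q_0$ by the first paragraph. Dropping the contribution of a right emission at step $j+1$ followed by a left emission at step $j+2$, we obtain the a.s.\ bound
\[
\tP(L(j+2)>L(j)\mid\cG_j)\ \geq\ p_L+q_0\,p_F\ =\ p_L+q_0(1-p_L-p_R).
\]
The remaining ingredient is the near-symmetry of left versus right emissions. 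Decomposing emission $j+1$ into the successive excursions the hot particle makes from the (moving) hole, at any visit to the hole at relative position $h\in[0,a]$ the conditional probabilities of ending that visit with a left or a right emission are, by gambler's ruin, $\tfrac{1}{2(1+\lambda)}\cdot\tfrac{1}{(K-a)+h}$ and $\tfrac{1}{2(1+\lambda)}\cdot\tfrac{1}{(K-a)-h}$, whose ratio is at least $\tfrac{(K-a)-h}{(K-a)+h}\geq 1-\tfrac{2}{a}$. Summing over all hole visits, and absorbing into an $O(1/a)$ error the single initial excursion (whose starting site need not be the hole, and which can produce an unmatched right emission only with probability $O(1/a)$), we get $p_R\leq p_L+\varepsilon$ with $\varepsilon=O(1/a)$. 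Substituting, and using $1-2q_0=\tfrac{1}{a-1}\geq0$ so that $p_L(1-2q_0)\geq0$,
\[
p_L+q_0(1-p_L-p_R)\ \geq\ q_0+p_L(1-2q_0)-q_0\varepsilon\ \geq\ q_0(1-\varepsilon)\ =\ \big(\tfrac12-\tfrac{1}{2(a-1)}\big)(1-\varepsilon),
\]
which is at least $\tfrac13$ once $a$ is as large as in \eqref{aclu}.

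The main obstacle is the excursion decomposition behind $p_R\leq p_L+O(1/a)$: the hole moves both rightward (on a sleep) and leftward (to the leftmost freshly visited site, when a left excursion returns), and the number of excursions in one emission is unbounded, so the comparison must be carried out visit-by-visit under the strong Markov property and then summed, with separate care for the first excursion's starting position. By contrast, once this comparison and the value $q_0$ are in hand, the combination with the freeze-recovery mechanism is the short arithmetic above; in particular Lemmas~\ref{walklemma1} and~\ref{walklemma2} are not needed for this estimate. It is worth stressing that for large $\lambda$ a single emission freezes almost surely, so $p_L\to0$; the bound survives precisely because freezing is self-correcting, the frozen particle enabling the \emph{next} emission to cross the block freely and emit left with probability close to $\tfrac12$.
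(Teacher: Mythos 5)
Your proof is correct and follows essentially the same route as the paper: both rest on the two facts that a failure (freeze) forces the next attempted emission in block~$i$ to be of the frozen-particle type, hence an automatic success, and that gambler's ruin makes any successful emission go left with probability close to $\tfrac12$, certainly at least $\tfrac13$ for $a$, $K$ as in \eqref{aclu}. The only difference is presentational: where the paper simply invokes ``classical gambler's ruin'' for the direction of a successful emission, you avoid conditioning on success by proving the unconditional comparison $p_R \leq p_L + O(1/a)$ via the hole-visit excursion decomposition and then combining it algebraically with the freeze-recovery bound, which fills in the detail the paper leaves implicit.
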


Before proving the lemmas, we show how they can be used to prove Lemma~\ref{lem:singleblock}.

\begin{proof}
[Proof of Lemma~\ref{lem:singleblock}]
%\ \comment{C: We need to say why we are conditioning on $\sigma$-algebra ${\mathcal F}_{j-1}$ in the following paragraph. // L: I added some explanation for the upper bound. We are conditioning on $ \cF_{i-1} $ because this is what we are supposed to consider in order to prove the statement of the lemma. Did I miss something?}

Recall each particle added at site $ iK+a $ is thawed and, because of the left-most priority policy, the $ m $th particle added to $ iK+a $ in $ \eta_0^{\rm neat}(\infty, i) $ will only become hot after the $ (m-1) $st particle has been designated as the hot particle \emph{and} there are no other thawed free particles to the left of $ iK+a $.
Moreover, it will become hot (and increase $ j $) before the $ (m+1) $st such particle becomes hot.
Thus, each value of $ m $ will correspond to at least one value of $ j $, whereas multiple values of $ j $ can correspond to the same $ m $. Hence,
\[
\sum_{m = 0}^\infty e^{16 S_i(m)} 1\{L_i(m) = \ell\}
\leq
\sum_{j = 0}^\infty e^{16 S(j)} 1\{L(j) = \ell\}
.
\]
So the sum in~\eqref{nwa} can be bounded from above by
\begin{equation}
\label{sbe_reindex}
\leq
\sup_{\ell \geq 0} \sum_{j = 0}^\infty \E \left[ e^{16 S(j)} 1\{L(j) = \ell\} \middle| \cF_{i-1}\right].
\end{equation}
For each $\ell \in \N_0$, let $\tau_\ell$ be the smallest $j$ for which $L(j) = \ell$. Now fix $\ell \in \N$, and re-write the above sum as 
\begin{equation}
\label{sbe_expression}
=
\E \left[\sum_{j = \tau_{\ell}}^{\tau_{\ell +1} - 1}e^{16 S(j)} \ \middle| \ \cF_{i-1}\right].
\end{equation}

Recall that $ \tE $ denotes $ \E[\, \cdot\, |\cF_{i-1}] $. We first rewrite this expectation as
\begin{equation}
=
\tE \sum_{j=0}^{\infty} 1\{ \tau_{\ell+1}-\tau_{\ell} > j \} [1+(e^{16}-1)1\{S(\tau_\ell + j=1)\}]
.
\end{equation}
%Fixing $ k=600$ and % \in \N $ and 
Define $ G_\ell $ as the event that $ S(j)=0 $ for all $ j $ such that $ L(j)=\ell $, and bound the above expectation by
%\comment{L: I changed definition of event G}
\begin{equation}
\leq
\tE \sum_{j=0}^{\infty} 1\{ \tau_{\ell+1}-\tau_{\ell} > j \} [ 1\{G_\ell\} + e^{16} 1\{G_\ell^c\} ]
,
\end{equation}
which equals
\begin{multline}
=
\tE [(\tau_{\ell+1}-\tau_{\ell}) 1\{ G_\ell \}]
+
\tE \sum_{j=0}^{\infty} e^{16} 1\{ \tau_{\ell+1}-\tau_{\ell} > j \} 1\{G_\ell^c\}
. \label{largement} 
\end{multline}
The first term in \eqref{largement} is bounded by $ \tE [\tau_{\ell+1}-\tau_{\ell}] $.
By Lemma~\ref{walklemma3},
\begin{equation}
\label{eq:twicegeom}
\p \big[ \tau_{\ell+1}-\tau_\ell > j \,\big|\, \cG_{\tau_{\ell}} \big] \leq (\tfrac{2}{3})^{\lfloor j/2 \rfloor}
\end{equation}
for every $ j \in \N $,
%the conditional distribution of $ \tau_{\ell+1}-\tau_\ell $ given $ \cG_{\tau_{\ell}} $ is dominated by twice a geometric random variable with parameter $ \frac{1}{3} $.
hence
\begin{equation}
\label{eq:six}
\tE [\tau_{\ell+1}-\tau_{\ell}] < 6.
\end{equation}

For the second term in \eqref{largement}, we split the sum at $ k := 600 $ and bound one of the indicators by $ 1 $, obtaining 
\begin{equation}
\label{donc}
\leq
k e^{16} \tP(G_\ell^c) + \sum_{j=k}^{\infty} e^{16} \tP (\tau_{\ell+1}-\tau_{\ell} > j).
\end{equation}
as an upper bound for the last term in \eqref{largement}.

%Let $ \varepsilon>0 $.
%Take $ k $ large enough so that
%We have that
%For $ j=0,1,2,\dots $, by successively conditioning on $ \cG_{\tau_{\ell}+j} $ and applying Lemma~\ref{walklemma3}, we can bound the summand above by $e^{16}(\tfrac{2}{3})^{\lfloor j/2 \rfloor}$.

By~\eqref{eq:twicegeom}, the sum in \eqref{donc} is bounded by 
\begin{equation}
\label{eq:tail}
\leq \sum_{j = k}^\infty e^{16} (\tfrac{2}{3})^{\lfloor \tfrac{j}{2} \rfloor} < 1.
\end{equation}
The last inequality follows from our choice of $ k $.

To conclude the proof, we now show that
\begin{equation}
\label{eq:kexpgood}
\tP(G_\ell^c) < \frac{2}{k e^{16}}.
\end{equation}
We consider two events that together imply $ G_\ell $.
First, using~\eqref{eq:twicegeom} and union bound,
%First, from Lemma~\ref{walklemma3}, the conditional distribution of $ \tau_{\ell+1}-\tau_{\ell-2} $ given $ \cG_{\tau_{\ell-2}} $ is dominated by twice the sum of three independent geometric variables with parameter $ \frac{1}{3} $.
%\comment{C: Why to we have subscripts of $l+1$ and $l-2$ instead of the ones in the statement of Lemma~\ref{walklemma3}? }
%\comment{L: It is easy to prove for a fixed $ j $, but then we want to use it for the `stopping times' $ j=\tau_{\ell-2}$ as well as $\tau_{\ell-1} $ and $ \tau_{\ell} $.}
%It thus follows that
%\begin{equation}
%\label{eq:nottoolate}
%\tP( \tau_{\ell+1}-\tau_{\ell-2} \leq k \,|\, \cG_{\tau_{\ell-2}} )
%\geq
%1 - \frac{1}{ke^{16}}
%.
%\end{equation}
%Indeed,
\[
\tP( \tau_{\ell+1}-\tau_{\ell-2} > k \,|\, \cG_{\tau_{\ell-2}} )
\leq
%3 \cdot \p (\mathrm{Geom}(\tfrac{1}{3}) > \tfrac{k}{6}) < \frac{1}{ke^{16}}
3 (\tfrac{2}{3})^{k/6} < \frac{1}{ke^{16}}
.
\]
%where the last inequality holds for our choice of $ k $.

%\comment{J: Here's a proof that $k = 600$ works. 
%\begin{align} \p(\tau_{\ell+ 1} - \tau_{\ell - 2} > 600) & \leq \p(2(Geo(1/3) + Geo(1/3) + Geo(1/3)) > 600) \\
%& \leq 3 \p(Geo(1/3) > 100) \\
%& = 3 \cdot (2/3)^{100} \\
%& \leq \frac{1}{600 \cdot e^{16}}. 
%\end{align}
%}

%\comment{L: Here we need to be able to enlarge $ a $ or $ K $}
%\comment{C: I tried to take care of this. I have a hard time following the argument. Someone should check my work and add some more detail.}
%Now choose $ a \geq a_0 $ such that

%\comment{J: this still holds even with $k = 600$.} 
%\sout{Finally, $e^{-100}$, the probability in Lemmas~\ref{walklemma2} and~\ref{walklemma1}, is less than $e^{-16}k^{-2} $. }
%\comment{{C: How do we use this?!?}}
%\comment{L: Tried to expand it. Please check now.}

Second, using Lemmas~\ref{walklemma1} and~\ref{walklemma2},
\begin{multline}
\nonumber
\tP( \w(j) \in [0,\tfrac{a}{2}] \ \forall j=\tau_{\ell-2}+2,\dots,\tau_{\ell-2}+k \,|\, \cG_{\tau_{\ell-2}} )
\geq \\ \geq
(1 - e^{-100})
(1 - e^{-100})^{k-1}
>
1 - ke^{-100}
>
1 - \frac{1}{ke^{16}}
.
\end{multline}
Indeed,
the first $ (1-e^{-100}) $ term is for $ \w(\tau_{\ell-2}+1) \not \in (\frac{a}{2},a) $, and is provided by Lemma~\ref{walklemma1}.
Each of the other $ k-1 $ terms is for $ \w(\tau_{\ell-2}+j) \in [0,\frac{a}{2}] $, and is provided by Lemma~\ref{walklemma2} after conditioning on $ \cG_{\tau_{\ell-2}+j-1} $.

Occurrence of the above event, together with the event $ \tau_{\ell+1} \leq \tau_{\ell-2}+k $, imply that $ \w(j)\in[0,\frac{a}{2}] $ for all $ j $ such that $ L(j)=\ell $.
This in turn implies the event $ G_\ell $, because of~\eqref{eq:frozenifata}.
Hence, combining the two last estimates, we get~\eqref{eq:kexpgood}.

Finally, putting estimates~\eqref{eq:six}, \eqref{donc}, \eqref{eq:tail} and \eqref{eq:kexpgood} together, \eqref{largement} is bounded by
$ 6 + 1 + 2 < e^3 $.
%This gives the desired upper bound for~\eqref{sbe_expression} hence also 
Thus \eqref{sbe_expression} and \eqref{nwa} are also bounded by $e^3$. This proves the lemma.
%\comment{C: The proceeding calculation is hard to follow.}
%\comment{L: Tried to expand it. Please check now.}
\end{proof} 

\subsection{A Markov chain} 

Define $\whole_t$ to be a Markov chain on $[0, \infty]$ with initial condition 
$\whole_0 = 0$.
Let
$\{Z_1, Z_2, \ldots\}$ be an iid sequence of random variables with common distribution $Z$ such that
\begin{equation}
\p(Z = z) = \frac{1}{z(z+1)}, \hspace{5pt}z = 1, 2, \ldots 
\end{equation}
This distribution $Z$ is the distribution of the maximum distance away from 0 reached by a simple random walk excursion started from 0.

%, with $\whole_{t+1} = 0$ if $\whole_t = a$ -- that is, the hole is reset to 0 when a failure occurs -- and otherwise with increments given as follows.
For each $v$ define the distribution $Y_v$ which takes values
\begin{equation}
\label{eq:jumps}
\begin{cases}
+1, & \text{ with probability } \frac{\lambda}{\lambda+1}, \\
0, & \text{ with probability } \frac{1/2}{\lambda+1}, \\
- \min(Z,v) & \text{ with probability } \frac{1/2}{\lambda+1}.
\end{cases} \end{equation}
These values represent (roughly) the change in the position of the hole if the free particle
falls asleep without moving, moves to the right and returns to the hole, or
moves to the left and returns to the hole.

Let $$\delta= \left(\frac{1}{\lambda+1}\right)\left(\frac{1}{2}\right)\left( \frac{1}{K-2a}\right).$$ This is an upper bound on the probability of an emission to the right (or to the left.) Emissions correspond with either $Y_v=0$ or $Y_v=-v$.
For each $v$ define the distribution $\tilde Y_v$ which takes values
\begin{equation}
\label{eq:jumpstwo}
\begin{cases}
+1, & \text{ with probability } \frac{\lambda}{\lambda+1}+\delta, \\
0, & \text{ with probability } \frac{1/2}{\lambda+1}, \\
- k & \text{ with probability } \frac{1/2}{\lambda+1}\frac{1}{k(k+1)} \text{ for $k =1\dots v-1$}\\
- v & \text{ with probability } \frac{1/2}{\lambda+1}-\delta.
\end{cases} \end{equation}
These values give a distribution which stochastically dominates the change in the position of the hole conditioned on there being no emission.
Then set
the distribution of $\whole_{t+1} - \whole_t$ to be $Y_{\whole_t}.$

For small values $v$ the expected drift 
$$\E(\whole_{t+1} - \whole_t \ | \ \whole_t=v)=\E(Y_v)>0$$
but for values $v\geq a/3$ we get the following lemma.
%the expected drift 
%$$\E(\whole_{t+1} - \whole_t \ | \ \whole_t=v)=\E(Y_v)<-11.$$

\begin{lemma} \label{cal} For $v\geq a/3$
$$%\E(\whole_{t+1} - \whole_t \ | \ \whole_t=v)
\E(Y_v),\E(\tilde Y_v)\leq -40.$$
\end{lemma}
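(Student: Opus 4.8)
The plan is to evaluate both means directly. The mechanism behind the lemma is that the leftward jump, which models a simple random-walk excursion, has only a logarithmically light tail, so that its mean of order $\log v$ eventually overwhelms the bounded rightward drift $\frac{\lambda}{\lambda+1}$ coming from the sleep instructions; the exponentially large choice of $a$ in~\eqref{aclu} is calibrated precisely so that $\log(a/3)$ beats $\lambda$.

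First I would record the tail of $Z$. Since $\p(Z=z)=\frac{1}{z(z+1)}=\frac1z-\frac1{z+1}$, the sum telescopes to give $\p(Z\geq k)=\frac1k$ for every $k\in\N$. Hence, summing tails,
\[
\E[\min(Z,v)]=\sum_{k\geq 1}\p(\min(Z,v)\geq k)=\sum_{k=1}^{\lfloor v\rfloor}\frac1k=H_{\lfloor v\rfloor}\geq \ln\lfloor v\rfloor,
\]
where $H_m$ denotes the $m$-th harmonic number. This single identity carries the argument: the leftward displacement has mean growing like $\log v$, not $O(1)$.

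Next, from the definition~\eqref{eq:jumps} of $Y_v$ (the $+1$ branch contributes $\frac{\lambda}{\lambda+1}$ and the $0$ branch nothing),
\[
\E(Y_v)=\frac{\lambda}{\lambda+1}-\frac{1/2}{\lambda+1}\,\E[\min(Z,v)]\leq 1-\frac{\ln\lfloor v\rfloor}{2(\lambda+1)}.
\]
For $v\geq a/3$ with $a=12\lceil e^{100(\lambda+1)}\rceil$, the quantity $a/3$ is an integer, so $\lfloor v\rfloor\geq a/3\geq 4e^{100(\lambda+1)}\geq e^{100(\lambda+1)}$, giving $\ln\lfloor v\rfloor\geq 100(\lambda+1)$ and therefore $\E(Y_v)\leq 1-50=-49\leq -40$. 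The constant $100$ in~\eqref{aclu} is exactly the slack needed to absorb the $\frac{1}{2(\lambda+1)}$ normalization and still clear $-40$.

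For $\tilde Y_v$ the bound is even easier, because its negative mass is concentrated out at $-v$. Keeping only the atom at $-v$, discarding the (negative) intermediate $-k$ terms, and bounding $\frac{\lambda}{\lambda+1}+\delta\leq 1+\delta$, the definition~\eqref{eq:jumpstwo} yields
\[
\E(\tilde Y_v)\leq 1+\delta(1+v)-\frac{v}{2(\lambda+1)}=1+\delta-\frac{v}{2(\lambda+1)}\Big(1-\tfrac{1}{K-2a}\Big).
\]
With $K=a^2$ we have $K-2a=a(a-2)\geq 2$, so the parenthesis is at least $\tfrac12$ and $\delta\leq\tfrac14$; hence for $v\geq a/3$ this is at most $2-\frac{a}{12(\lambda+1)}\leq 2-\frac{e^{100(\lambda+1)}}{\lambda+1}\leq -40$, the last step because $u\mapsto e^{100u}/u$ is increasing for $u\geq 1$ and so is minimized over $\lambda\geq 0$ at $\lambda=0$, with value $e^{100}$. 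I do not anticipate a genuine obstacle here: the statement is an elementary, second-moment-free computation once the telescoping tail $\p(Z\geq k)=\frac1k$ is observed, and the only thing left to check is that the prescribed $a$ is large enough, which it is with enormous room.
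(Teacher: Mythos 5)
Your treatment of $\E(Y_v)$ is correct and is essentially the paper's own computation: write out the mean, observe that the excursion-depth law contributes a harmonic sum of size at least $\log(a/3)\geq 100(\lambda+1)$ by the choice of $a$ in \eqref{aclu}, and conclude $\E(Y_v)\leq 1-50=-49$. Your telescoping tail identity $\p(Z\geq k)=\tfrac1k$ versus the paper's direct truncation of $\sum_k \tfrac{1}{k(k+1)}\cdot k$ at $a/3$ is a cosmetic difference.

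The $\tilde Y_v$ half, however, has a genuine gap. You read \eqref{eq:jumpstwo} literally, placing mass $\frac{1/2}{\lambda+1}-\delta$ on the atom $-v$. But \eqref{eq:jumpstwo} as printed is not a probability distribution: its masses total $1+\frac{1}{2(\lambda+1)}\bigl(1-\frac{1}{v}\bigr)>1$, so the display contains a typo that must be resolved before any expectation can be computed. The consistent reading --- the one matching the structure of $Y_v$, the stated purpose of dominating the hole displacement conditioned on no emission, and the identity the paper's own proof uses --- is that $\tilde Y_v$ is $Y_v$ with mass $\delta$ transferred from the bottom atom to $+1$, so the atom at $-v$ carries mass $\frac{1/2}{\lambda+1}\cdot\frac{1}{v}-\delta$; the factor $\frac1v=\p(Z\geq v)$ was dropped in the display. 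Under this definition your key step --- keeping only the atom at $-v$ and discarding the intermediate atoms $-k$ --- fails: that atom contributes only $-\frac{1}{2(\lambda+1)}+v\delta$ to the mean, which comes nowhere near cancelling the $+1$ drift no matter how large $a$ is. The negativity of $\E(\tilde Y_v)$ lives entirely in the intermediate atoms you discarded (the same logarithmic mass that drove your $Y_v$ bound), so your remark that ``the bound is even easier because the negative mass is concentrated at $-v$'' is true only of the typo, not of the intended object. The repair is one line: since $\tilde Y_v$ differs from $Y_v$ by moving mass $\delta$ from $-v$ to $+1$,
\[
\E(\tilde Y_v)=\E(Y_v)+(v+1)\delta\leq -49+\frac{a+1}{2(\lambda+1)\,a(a-2)}\leq -40,
\]
using $v\leq a$ (hole positions lie in $[0,a]$) and $K-2a=a(a-2)$. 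This is exactly the route the paper takes.
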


\begin{proof}
%Recall that $\lceil a = \max(100,e^{100 \lambda}) \rceil$. 
Note that 
\begin{align} {\E}[Y_v] & \leq 1- \frac{1}{(\lambda +1)} - \frac{1}{2(\lambda+1)} \sum_{k=1}^{a/3} \frac{1}{k(k+1)} \cdot k \\
& \leq 1- \frac{1}{(\lambda +1)} -\frac{\log(a/3)- \log 2}{2(\lambda +1)} \\
& \leq 1 - \frac{1}{(\lambda +1)}- \frac{100(\lambda +1)- \log 2}{2(\lambda +1)} \\
& \leq 1 - \frac{1}{(\lambda +1)} -49 + \frac{ \log 2}{2(\lambda +1)} \\
& \leq -48.
\qedhere
\end{align}
Using that $v \leq a$ and the definitions of $Y_{a/3}$ and $\tilde Y_{a/3}$ we also get that $${\E}[\tilde Y_v]={\E}[Y_v]+(v+1)\delta \leq -40.$$ \end{proof}

\begin{lemma} \label{sultans} 
Let random variables $\{\tilde Y_{ a/3 }(i)\}_{i=1}^{\infty}$ be i.i.d.\ with distribution $\tilde Y_{ a/3 }.$ 
\begin{equation} \label{defund}
\p \left (\sum_{i=1}^{a/6 } \tilde Y_{ a/3 }(i)\geq -a/6\right ) \leq e^{-a}.
\end{equation}
%The same holds if the upper sum is $a/2$ instead of $a/6$.
and 
\begin{equation} \label{spd}
\p \left (\sum_{i=1}^{a/2 } \tilde Y_{ a/3 }(i)>-2a/3\right ) \leq e^{-a}.
\end{equation}
\end{lemma}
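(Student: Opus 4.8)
The plan is to treat both inequalities as upper-tail large-deviation estimates for a sum of i.i.d.\ summands that are bounded above by $1$ and have strongly negative mean, and to control them by the exponential (Chernoff) method. Write $\mu = \E[\tilde Y_{a/3}]$; by Lemma~\ref{cal} we have $\mu \leq -40$. In both \eqref{defund} and \eqref{spd} the threshold lies far above $N\mu$, where $N = a/6$ (resp.\ $N = a/2$) is the number of terms, so each is a genuine upper deviation and the natural tool is, for every $t \geq 0$,
\begin{equation}
\p\left(\sum_{i=1}^N \tilde Y_{a/3}(i) \geq \beta\right) \leq e^{-t\beta}\big(\E[e^{t\tilde Y_{a/3}}]\big)^N,
\end{equation}
with $\beta = -a/6$ in the first case and $\beta = -2a/3$ in the second. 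Reducing the target bound $e^{-a}$ to a per-summand condition, for \eqref{defund} it suffices to exhibit $t \geq 0$ with $e^{t}\,\E[e^{t\tilde Y_{a/3}}] \leq e^{-6}$, and for \eqref{spd} it suffices to find $t \geq 0$ with $e^{4t/3}\,\E[e^{t\tilde Y_{a/3}}] \leq e^{-2}$.

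The core of the argument is therefore a bound on the moment generating function. Reading the probabilities off \eqref{eq:jumpstwo}, $\E[e^{t\tilde Y_{a/3}}]$ splits into the atom at $+1$ (contributing a factor growing like $e^{t}$), the atom at $0$, and the heavy left tail, whose weights $\tfrac{1}{k(k+1)}$ combine with the factors $e^{-tk}$ to form a convergent series. The key qualitative fact is that, although the $+1$ atom carries probability close to $1$ for large $\lambda$, the heavy left tail forces the first-order behaviour $\E[e^{t\tilde Y_{a/3}}] = 1 + t\mu + \cdots$ with $\mu \leq -40$, so the generating function dips below $1$ for an appropriate range of $t$. I would make this quantitative by showing that for a suitably chosen $t = t(\lambda)$ the logarithmic contribution of the left tail (of size comparable to $\tfrac{1}{2(\lambda+1)}\sum_k \tfrac{e^{-tk}}{k(k+1)}$) outweighs the $e^{t}$ growth coming from the sleep atom, using the largeness of $a$ from \eqref{aclu} (equivalently $v = a/3$ large, so that $\log v \approx 100(\lambda+1)$) to guarantee the effective drift remains strongly negative. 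The same $t$ is used for both inequalities, with the two per-summand thresholds above handled by the corresponding prefactors $e^{t}$ and $e^{4t/3}$.

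The main obstacle is precisely this moment-generating-function estimate: one must choose $t$ in the narrow regime where the rare-but-large leftward jumps dominate the frequent $+1$ steps, and then carefully track the $\lambda$-dependence to verify that the resulting exponential rate beats the prefactors, yielding the stated per-summand bounds and hence the factor $e^{-a}$ after raising to the $N$th power. A secondary technical point is the truncation at $-v$: since the left jumps are capped at $v = a/3$, I would verify that replacing $\min(Z,v)$ by $Z$ (or discarding the $e^{-tv}$ atom) changes the generating function only negligibly, so that the clean series $\sum_{k \geq 1} \tfrac{e^{-tk}}{k(k+1)}$ may be used in place of the finite sum. Once the generating-function bound is in hand, both \eqref{defund} and \eqref{spd} follow immediately from the Chernoff inequality above.
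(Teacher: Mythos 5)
Your Chernoff/m.g.f.\ strategy is a genuinely different route from the paper's, which quotes a Hoeffding-type bound for summands in $[-b,1]$; but your argument cannot be completed, because the per-summand estimate on which it rests does not exist. From \eqref{eq:jumpstwo}, the atom at $+1$ has mass $\frac{\lambda}{\lambda+1}+\delta \geq \frac{\lambda}{\lambda+1}$, so for every $t \geq 0$,
\begin{equation}
e^{t}\,\E\bigl[e^{t\tilde Y_{a/3}}\bigr]
\;\geq\;
e^{2t}\,\p\bigl(\tilde Y_{a/3}=1\bigr)
\;\geq\;
\tfrac{\lambda}{\lambda+1},
\end{equation}
and $\tfrac{\lambda}{\lambda+1}>e^{-6}$ as soon as $\lambda>(e^{6}-1)^{-1}\approx 0.0025$; similarly $e^{4t/3}\E[e^{t\tilde Y_{a/3}}]\geq \tfrac{\lambda}{\lambda+1}>e^{-2}$ once $\lambda>(e^{2}-1)^{-1}$, so the condition you need for \eqref{spd} is also unattainable. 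The flaw in your heuristic is the claim that $\E[e^{t\tilde Y_{a/3}}]\approx 1+t\mu$ with $\mu\leq-40$ at the relevant scale of $t$: the negative mean comes entirely from rare jumps of size up to $a/3$, so $\E[\tilde Y_{a/3}^{2}]\asymp a/\lambda$ and the linearization is valid only for $t=O(\lambda/a)$, far below the constant-order $t$ that would be needed to beat the prefactor $e^{t}$ per summand.

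Moreover, this is not a reparable defect of your method: the statement itself is false in the regime the paper cares about. On the event that all $a/6$ summands equal $+1$, the sum is $a/6\geq-a/6$, so the left side of \eqref{defund} is at least $\bigl(\tfrac{\lambda}{\lambda+1}\bigr)^{a/6}=e^{-(a/6)\log(1+1/\lambda)}$, which exceeds $e^{-a}$ whenever $\log(1+1/\lambda)<6$, i.e.\ for every $\lambda>(e^{6}-1)^{-1}$; the same event shows \eqref{spd} fails for $\lambda>(e^{2}-1)^{-1}$. (For the same reason the paper's own proof does not stand up: the ``Hoeffding bound'' it quotes drops the squared range $(1+b)^{2}$ from the denominator of the exponent, and is contradicted by this very event.) What is true, and what your m.g.f.\ computation does deliver if you optimize at $t$ exponentially small in $\lambda$ rather than of constant order, is a bound of the form $e^{-c(\lambda)a}$ with $c(\lambda)>0$ exponentially small in $\lambda$ (the optimal $t$ is of order $e^{-4\lambda}$). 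Since $a=12\lceil e^{100(\lambda+1)}\rceil$ by \eqref{aclu}, such a bound still overwhelms the factors $a^{3}$ and $1/a$ against which Lemma~\ref{sultans} is compared in Lemmas~\ref{barricade}, \ref{walklemma2} and~\ref{walklemma1}, so the downstream argument can be repaired with the weaker estimate; but as stated, neither your proof nor the paper's establishes the lemma, because the lemma is not true.
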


\begin{proof}
We will use the following Hoeffding bound.
Suppose $(Y_i)_{i=1}^\infty$ are i.i.d.\ random variables with mean $\nu$ and $Y_i \in [-b, 1]$. Fix $\gamma > 0$ such that $\gamma b$ is an integer, and assume $\nu < - \gamma^{-1}$. Then 

\begin{equation} \p\left(\sum_{i=1}^{\gamma b} Y_i > - b \right) \leq \exp(-2 \gamma (1 + \gamma \nu)^2 b). \end{equation}
Then using $a/6$ instead of $b$, $\gamma=1$ and $\nu=-40$ we get the first inequality.
Using $2a/3$ instead of $b$, $\gamma=3/4$ and $\nu=-40$ we get the second inequality.
\end{proof}

Let $A_t$ be the event that the first $t'>t$ such that $\whole_t=a$ is less than
 the first $t'>t$ such that $\whole_t \leq a/3$.
 \begin{lemma} \label{barricade}
For any value $v \in (a/3,a/2]$ and $t \in \N$. 
$$\p(A_t \ | \ \whole_t=v)\leq e^{-a}.$$
%and for any value $w\geq a/2$
%$$\p\left( \bigcap_{s=t}^{s=t+a/2} \{\whole_s \in(a/3,a)\} \ | \ \whole_t=w\right)\leq e^{-a}.$$
\end{lemma}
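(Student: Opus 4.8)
The plan is to analyze the chain $\whole$ directly while it remains in the window $(a/3,a)$. On this window each increment $Y_v$ has strongly negative drift (this is the content of Lemma~\ref{cal}), and the quantitative tool will be the fixed-length large-deviation bound in Lemma~\ref{sultans}. The point is that climbing from $v\le a/2$ all the way up to $a$ forces the partial sum of the dominating i.i.d.\ variables $\tilde Y_{a/3}$ to be atypically large.

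First I would record the key structural feature: the only positive value any $Y_v$, and hence any increment of $\whole$, can take is $+1$. On $A_t$, let $N$ be the first time after $t$ at which $\whole$ equals $a$. Since $\whole$ moves from $v$ to $a$ in $N$ steps and each step raises $\whole$ by at most $1$, we get $N\ge a-v\ge a/2$, using $v\le a/2$. Thus every realization of $A_t$ spends at least $a/2$ steps in $(a/3,a)$ before hitting $a$ (recall $a$ is a multiple of $12$, so $a/2,a/3,a/6$ are integers).

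Next I would set up a step-by-step monotone coupling. As long as $\whole\ge a/3$, the increment $Y_{\whole_s}$ is stochastically dominated by $\tilde Y_{a/3}$: compared with $Y_v$ for any $v\ge a/3$, the variable $\tilde Y_{a/3}$ puts (weakly) more mass on $+1$ and caps its negative jumps at $-a/3$ rather than at $-v\le -a/3$, so a routine comparison of the tail probabilities $\p(\,\cdot\le x)$ gives $Y_v\preceq \tilde Y_{a/3}$. Driving both the chain and an i.i.d.\ sequence $\tilde Y_{a/3}(1),\tilde Y_{a/3}(2),\dots$ from one i.i.d.\ stream of uniforms via inverse CDFs, I can arrange $\whole_{t+i}-\whole_{t+i-1}\le \tilde Y_{a/3}(i)$ at every step $i$ with $\whole_{t+i-1}\ge a/3$, with the $\tilde Y_{a/3}(i)$ genuinely i.i.d. On $A_t$ the chain stays above $a/3$ for the first $N\ge a/2$ steps, so this domination holds for $i=1,\dots,a/2$, and summing gives $\whole_{t+a/2}-v\le \sum_{i=1}^{a/2}\tilde Y_{a/3}(i)$.

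Finally I would extract a lower bound for $\whole_{t+a/2}-v$ on $A_t$ and match it to Lemma~\ref{sultans}. If $a/2<N$ the chain is still strictly above $a/3$ at time $t+a/2$, so $\whole_{t+a/2}-v> a/3-v\ge -a/6$; if $a/2=N$ then $\whole_{t+a/2}=a$ and the difference is positive. In either case $\sum_{i=1}^{a/2}\tilde Y_{a/3}(i)\ge \whole_{t+a/2}-v> -a/6> -2a/3$, so $A_t$ is contained in the event $\{\sum_{i=1}^{a/2}\tilde Y_{a/3}(i)> -2a/3\}$, and the second inequality of Lemma~\ref{sultans} yields $\p(A_t\mid \whole_t=v)\le e^{-a}$. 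The main thing to get right is combining the deterministic lower bound $N\ge a/2$ with exactly the step count ($a/2$) and threshold ($-2a/3$) appearing in Lemma~\ref{sultans}, so that the random stopping time $N$ is eliminated and a single fixed-length estimate suffices; checking $Y_v\preceq\tilde Y_{a/3}$ is routine but must be done with the negative jumps capped at $-a/3$ correctly.
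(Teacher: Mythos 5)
Your proof is correct and takes essentially the same approach as the paper: a deterministic lower bound of $a/2$ steps before hitting $a$ (since every increment is at most $+1$ and $v\leq a/2$), stochastic domination of the increments by $\tilde Y_{a/3}$ while the hole stays above $a/3$, and then a fixed-length deviation bound from Lemma~\ref{sultans}. The only cosmetic difference is that you invoke the second inequality \eqref{spd} ($a/2$ terms, threshold $-2a/3$) while the paper's displayed bound uses the first inequality \eqref{defund} ($a/6$ terms, threshold $-a/6$); both suffice, and your version cleanly resolves the paper's slight mismatch between its text (which speaks of $a/2$ summands) and its display (which uses $a/6$).
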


\begin{proof}
If $A_t$ occurs then $t'-t\geq a/2$. For each $s \in [t,t')$ we have
$\whole_s>a/3$.
Thus for every $s \in [t,t')$ we have that the distribution of $\whole_{s+1}-\whole_s$ is dominated by $Y_{\lfloor a/3 \rfloor}$ .
So from this starting configuration $A_t$ requires that the sum of $a/2 $ independent random variables with distributions dominated by $Y_{\lfloor a/3 \rfloor}$ is at least $-a/6$.
We bound this by the probability that the sum of $a/2$ independent random variables with distribution of $\tilde Y_{\lfloor a/3 \rfloor}$ is at least $-a/6$.
By Lemmas~\ref{cal} and~\ref{sultans} 
%(using $a/3$ instead of $a$, $\gamma=1/2$ and $\nu=-40$) 
we obtain the probability
$$ \text{} \hspace{1.1in} 
\p \left (\sum_{i=1}^{a/6 } \tilde Y_{ a/3 }(i)>-a/6\right ) \leq e^{-a}. \hspace{1.1in} \qedhere$$
\end{proof}

\subsection{Analysis of the carpet-hole toppling procedure} 
We say a step of the carpet-hole toppling procedure is the hot particle starting at the hole and either 
\begin{enumerate}
\item falling asleep,
\item emitting, or
\item moving away from the hole and returning to it.
\end{enumerate}

Let $T_j$ be the number of steps taken by carpet-hole procedure between the $j-1$st failure/emission and the $j$th failure/emission.

\begin{lemma} \mbox{} \label{pride}
For all $v$ we have
 \begin{equation}\label{dont} \p(T_j > a^3 \ | \ \w(j-1)=v)<\frac{1}{a}.\end{equation} If $v\leq a/2$ then 
 \begin{equation}\p(T_j < a/2 \ | \ \w(j-1)=v)<\frac{1}{4a}. \label{mounds} \end{equation}
%\item $\p(\w(j) >a/2 \cap T\in [a/2,a^3] \ | \ \w(j-1)=v)<C\frac{a}{K}.$ \label{sometimes}
%\end{enumerate}
\end{lemma}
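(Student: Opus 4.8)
The plan is to interpret $T_j$ as the number of steps the hole makes during the $j$th attempted emission, i.e.\ the number of steps before the attempt terminates — either by a successful emission (a step of type (2)) or by a failure, which is the only way the hole can be pushed to position $a$. In these terms \eqref{dont} says an attempt rarely survives more than $a^3$ steps, while \eqref{mounds} says an attempt begun from a low hole position $v\le a/2$ rarely terminates in fewer than $a/2$ steps. Both reduce to uniform per-step bounds on the probability of an emission, for which I would use the gambler's-ruin estimates already encoded in $\delta$.

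For \eqref{dont} I would note that during an ongoing attempt the hole sits at some $w\le a-1$, so a rightward escape to $iK+K-a$ is always available and, by gambler's ruin, occurs on the current step with conditional probability at least $\frac{1}{2(\lambda+1)}\cdot\frac{1}{K-a-w}\ge \frac{1}{2(\lambda+1)K}=:p_{\min}$, uniformly in the past. Hence each step terminates the attempt with probability at least $p_{\min}$, so $T_j$ is stochastically dominated by a geometric variable of parameter $p_{\min}$ and
\[ \p(T_j>a^3\mid \w(j-1)=v)\le (1-p_{\min})^{a^3}\le \exp(-p_{\min}a^3). \]
Substituting $K=a^2$ gives $p_{\min}a^3=\frac{a}{2(\lambda+1)}$, and since $a\ge 12\,e^{100(\lambda+1)}$ this exponent dwarfs $\log a$, putting the right-hand side far below $1/a$. (When $v=a$ one is in Case~1, a single forced excursion out of the block, so $T_j=1$ and the bound is immediate.)

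For \eqref{mounds} the decisive structural input is the monotonicity of the hole: from the form of $Y_v$ the hole advances by at most $+1$ per step, as it moves right only when the hot particle falls asleep. Thus, starting from $v\le a/2$, reaching the failure site $a$ costs at least $a-v\ge a/2$ steps, so on $\{T_j<a/2\}$ the attempt cannot have failed and must instead have emitted within its first $\lfloor a/2\rfloor-1$ steps. Since each step emits (to either side) with conditional probability at most $2\delta$, a union bound gives
\[ \p(T_j<a/2\mid \w(j-1)=v)\le (a/2-1)\,2\delta=(a-2)\,\delta=\frac{1}{2(\lambda+1)\,a}, \]
using $K=a^2$ in the last equality; for the relevant range $\lambda\ge 1$ (small $\lambda$ being covered by \cite{BasuGangulyHoffman18}) this is at most $\frac{1}{4a}$.

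Each estimate is short once the per-step emission probabilities $p_{\min}$ (from below) and $2\delta$ (from above) are available uniformly in the hole position and the history, so the genuine work — and the main obstacle — is the bookkeeping that legitimizes treating the hole as a Markov chain with these clean bounds, in the presence of the different termination rules (Cases 1 and 2). For \eqref{mounds} the delicate point is precisely the monotonicity $\Delta\w\le +1$: it is what converts ``terminate quickly'' into ``emit quickly'' and thereby lets one avoid analyzing the downward, excursion-driven motion of the hole altogether.
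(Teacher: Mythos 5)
Your argument follows the paper's own proof almost step for step: for \eqref{dont}, a uniform-in-the-past lower bound of order $\frac{1}{(\lambda+1)K}$ on the per-step emission probability plus geometric domination (the paper uses $\frac{1}{(\lambda+1)(K+a)} > a^{-2.1}$ and the bound $(1-a^{-2.1})^{a^3} < \frac1a$); for \eqref{mounds}, the observation that the hole moves right by at most one site per step, so that starting from $v \leq a/2$ no failure can occur within $a/2$ steps, followed by a union bound against the per-step emission probability. Your treatment of the $v=a$ case in \eqref{dont} is a small addition the paper leaves implicit, and is fine.

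The one place you diverge is the constant in \eqref{mounds}, and here you have put your finger on a real issue rather than created one. Your union bound honestly yields $\frac{1}{2(\lambda+1)a}$, which is $\leq \frac{1}{4a}$ only when $\lambda \geq 1$. The paper claims $\frac{1}{4a}$ for all $\lambda$, but its derivation feeds the quantity $\frac{1}{2(\lambda+1)K}$ into the union bound after asserting the per-step emission probability is ``at most $\frac{1}{(\lambda+1)(K-a)} > \frac{1}{2(\lambda+1)K}$'' --- i.e.\ it substitutes a \emph{smaller} number than its own upper bound, an inequality used in the wrong direction; for $\lambda$ near $0$ the probability of an emission within $a/2$ steps is genuinely of order $\frac{1}{2a}$, so the stated constant $\frac14$ cannot be recovered by this method. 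However, your proposed repair is not airtight: \cite{BasuGangulyHoffman18} covers only $\lambda < \lambda_0$ for some unspecified small $\lambda_0$, leaving $\lambda \in [\lambda_0, 1)$ untreated, and in any case it proves the theorem, not this lemma about the toppling procedure. Two clean fixes are available: (a) reduce the whole problem to $\lambda \geq 1$ by the known monotonicity of fixation in the sleep rate (see \cite{RollaSidoravicius12} or \cite{Rolla19}), or (b) simply state \eqref{mounds} with the bound $\frac{1}{2(\lambda+1)a} \leq \frac{1}{2a}$, which costs nothing downstream: in the proofs of Lemmas~\ref{walklemma2} and~\ref{walklemma1} this estimate is only combined with other terms of order $\frac1a$ to beat $e^{-100}$, so any bound $\frac{C}{a}$ with fixed $C$ suffices.
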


\begin{proof}
At each step the chance of an emission is at least $\frac{1}{(\lambda+1)(K+a)}>a^{-2.1}.$ Then the first inequality follows from 
$$\left(1-\frac{1}{(\lambda+1)(K+a)}\right)^{a^3}<(1-a^{-2.1})^{a^3}<2e^{ -a^{.9}}<\frac{1}{a}.$$

For the second inequality as $v<a/2$ there can be no failure in the first $a/2$ steps. Thus $T_j$ can only be $< a/2$ if there is an emission. At each step the chance of an emission is at most $\frac{1}{(\lambda+1)(K-a)}>\frac{1}{2(\lambda+1)K}.$
By a union bound the probability of an emission in the first $a/2$ steps is at most
\[
(a/2)\frac{1}{2(\lambda+1)K} \leq \frac{a}{4K}= \frac1{4a}. \qedhere
\]
\end{proof}

\begin{proof}[Proof of Lemma~\ref{walklemma2}]
\
Recall that $ \cG_j $ contains all the information revealed up to the end of the $ j $th emission attempt in block~$ i $ plus the information encoded by $ \cF_i $. 
% From the definition of ${\mathcal G}_{j-1}$ it follows that
Thus for each $j$, $\w(j)$ is ${\mathcal G}_{j}$ measurable.
The process $\w(j) $ is a Markov process that is related to, but different from the one described in the previous section. That process continues until there is an emission or a failure. (Emissions only occur when $\whole(j)-\whole(j-1) \in \{0, w_{j-1}\}$ so they are correlated with the $\whole(j)$ process.) 
As $\cG_{j-1}$ contains no information after the $j-1$st emission, conditioning on ${\mathcal G}_{j-1}$ is equivalent to conditioning on $\w(j-1)$.

%\comment{C: How about this for conditioning on ${\mathcal G}_{j-1}$?}

Thus it is sufficient to show that for any value $v \in [0, a/2] \cup \{a\}$,
$$\p(\w(j) >a/2 \ | \ \w(j-1)=v) < \frac{4}{a}<e^{-100}.$$

{\bf Case $ v=a $.} If $v=a$ then there is a frozen free particle in the hole, every location in the block and the adjacent transit regions has a particle, and the hot particle is performing simple random walk until it leaves one of the two transit regions at $K-a$ or $-K+a$.
Since the hot particle starts at $\pm a$, the probability that it visits the entire block before being emitted is at least $$1 - \frac{2a}{K-2a}>1-\frac{4a}{K}=1-\frac{4}{a}.$$ On this event, the hole is reset to 0, so $\w(j) = 0$. 
This establishes the first case.

{\bf Case $ v \leq a/3 $.} We will prove the following bounds. 
%$$\tP( \w(j)>a/2 \cap {T_j <a/6} \,|\, \w(j-1)=v ) =0, $$
\begin{equation} \label{sdot}\tP( \w(j)>a/2 \cap \{T_j \in [0,a^3)\} \,|\, \w(j-1)=v ) < \frac{1}{a} \end{equation} and
\begin{equation}\tP( \w(j)>a/2 \cap \{T_j \geq a^3\} \,|\, \w(j-1)=v ) < \frac{1}{a}. \label{removal} \end{equation}
Then this case will follow by a union bound.

Consider the process with the hole started at $a/3$. We calculate the probability that the hole reaches $a/2$ before it becomes less than $a/3$. This takes at least $a/6$ steps. At each of these steps the hole is at or to the right of $a/3$. Thus the distribution of the movement of the hole, conditioned on no emission, is stochastically bounded by $\tilde Y_{a/3}$ from \eqref{eq:jumpstwo}. The probability that the hole reaches $a/2$ before it reaches $a/3-1$ is therefore bounded by the probability that the sum of $a/6$ copies of $\tilde Y_{a/3}$ is at least 0. By Lemma~\ref{sultans} this probability is bounded by $e^{-a}$. 

To prove \eqref{sdot} we note that if $\w(j)>a/2 \cap \{T_j \in [0,a^3)\}$ occurs then 
one of the first $a^3$ times the hole is at $a/3$ it moves to $a/2$ before returning to the left of $ a/3$. By the union bound and the fact that $a>1$ this has probability at most $a^3 e^{-a}<1/a.$ 
The bound in \eqref{removal} follows straight from the first half of Lemma~\ref{pride}.

%\comment{C: Is this right for conditioning on non emission?}

{\bf Case $ v \in (a/3,a/2] $.} We proceed as in the previous case except that we break the event in \eqref{sdot} into two pieces; one with $T_j<a/2$ and one with 
$T_j \in [a/2,a^3)$. Not that a failure cannot occur with $T_j < a/2$. The probability that an emission occurs in the first $a/2 -1$ steps is bounded by $1/4a$ by the second half of Lemma~\ref{pride}. 

Next we bound the probability that an emission does not occur in the first $a/2 -1$ steps and the hole is always to the right of $a/3$ in the first $a/2-1$ steps. In this case the step distribution of the hole is bounded by $\tilde Y_{a/3}$. If the hole is never $\leq a/3$ then the sum of the first $a/6$ steps (conditioned on no emission) is at least $-a/6$ with probability at most $e^{-a}$ by \eqref{defund} in Lemma~\ref{sultans}.
Once the hole is at (or to the left of) $a/3$ we proceed as in the previous case and we get that 
 $$\p(\w(j)>a/2 \cap \{T_j<a/2\})<1/a$$ and
 $$\p(\w(j)>a/2 \cap \{T_j\in[a/2,a^3]\})<2/a.$$
From the first half of Lemma~\ref{pride} we get that
 $$\p(\w(j)>a/2 \cap \{T_j>a^3\})<1/a.$$
 Then the union bound completes the proof.
%\comment{ In the last case the distribution conditioned on non emission is not exactly $Y_{a/3}$. Also we can have failure occurring after exactly $a/2$ steps.}
\end{proof}

Next we prove Lemma~\ref{walklemma1}.

\begin{proof}
[Proof of Lemma~\ref{walklemma1}] \text{ }
As mentioned in the previous proof conditioning on ${\mathcal G}_{j-1}$ is equivalent to conditioning on $\w(j-1)$.

%\comment{C: We need to say why we are conditioning on $\sigma$-algebra ${\mathcal G}_{j-1}$ in the following argument.}
%\comment{L: we condition in the proof because we need to prove the statement, we condition in the statement because it's used in the proof of SBE. I think it's okay not to say that.}
%As mentioned at the start of the previous proof conditioning on ${\mathcal G}_{j-1}$ is equivalent to conditioning on $\w(j-1)$.

The case $j = 0$ is trivial, since the hole starts at position $0$. So suppose $j \geq 1$. First, if $\w(j-1) \in [0, a/2] \cup \{a\}$, then Lemma~\ref{walklemma2} immediately gives a bound for the probability that $\w(j) > a/2$. 

So assume $ \w(j-1) \in (a/2, a)$. The proof is very similar to
 the case $v> a/3$ in the proof of Lemma~\ref{walklemma2}.
If there is a failure before an emission then 
$\w(j)=a$. So we only need to consider the case that there is an emission first. The probability that there is an emission in the first $a/2$ steps is bounded by the second half of Lemma~\ref{pride}.

The probability that there is no emission in the first $a/2$ steps and
and the hole is never in $[0,a/3]$
in that time is at most $e^{-a}$ 
by the second part of Lemma~\ref{sultans}.
 The bounds are the same as in the proof of Lemma~\ref{walklemma2}.
%
%The probability of the $j$th hot particle being emitted at each step is always at most $\frac{1}{(1+\lambda)(K - a)}$, where $K = a^2$. Thus the probability that no emission happens within $a$ steps of the hole process $\whole_s$ or a failure occurs is at least
%\begin{equation}
%\left(1-\frac{2}{(1+\lambda)K - a}\right)^a \geq 1-C \frac{a}{K}.
%\end{equation}
%Let $D$ denote the event 
%\begin{align} D = \{\text{either there }& \text{is a failure between time } t \text{ and time } t+a \\
%& \text{ or }\whole_{s} < a/2 \text{ for some } s \in [t, t+a] \}
%\end{align}
%By the same Hoeffding bound appearing in the proof of Lemma~\ref{walklemma2}, 
%\begin{equation}
%\widetilde{\p}(D^c | \whole_t > a/2) \leq \exp(- C a). 
%\end{equation}
%On the event $D$ and when no emission occurs quickly, the hole returns to $[0, a/2] \cup \{a\}$ before the $j$th attempted emission occurs. Thus we can bound the probability of ending up at $\w(j) \in (a/2, a)$ by a union bound over the various error probabilities. 
\end{proof}

Finally, we complete our argument by proving Lemma~\ref{walklemma3}. 

\begin{proof}[Proof of Lemma~\ref{walklemma3}] \text{ }
Again conditioning on ${\mathcal G}_{j}$ is equivalent to conditioning on $\w(j)$.
Each time a failure occurs, by~\ref{prop:atmostone} the next hot particle to be activated in block~$i$ automatically is emitted successfully.
%\comment{C: Doesn't this only happen with high probability?}
%\comment{L: It is a.s., by~\ref{prop:atmostone}}
Thus at least one successful emission occurs for every two attempted emissions. Each particle starts from inside the block, which is an interval of length $2a$, while the neighboring blocks are distance $K$ away on either side. Thus, the classical gambler's ruin probabilities for simple random walk shows that each successful emission has probability at least $\frac{1}{2}-\frac{a}{2K} \geq \frac{1}{3}$ of being to the left. The result follows.
\end{proof}

\section*{Acknowledgments}
The authors would like to thank the International Centre for Theoretical Sciences, Bangalore. This work was initiated in 2019 at the program on Universality in Random Structures: Interfaces, Matrices, Sandpiles, held ICTS (Code: ICTS/urs2019/01). 
We would like to thank Riddhi Basu, Amine Asselah and Bruno Schapria for inspiring discussions.
C.H.\ was supported by NSF grant DMS-1712701.
J.R.\ was  was supported by NSF grants DMS-1444084 and DMS-1712701.

\bibliographystyle{bib/leoabbrv}
\bibliography{bib/leo,bib/leo2}

\end{document}